\theoremstyle{definition}
\newtheorem{definition}{Definition}[section]
\theoremstyle{plain}
\newtheorem{lemma}[definition]{Lemma}
\newtheorem{proposition}[definition]{Proposition}
\theoremstyle{remark}
\newcommand{\myDef}{\operatorname{Def}}
\newcommand{\mydim}{\operatorname{Dim}}
\newcommand{\mySet}{\mathbf{DIM}}
\newcommand{\myNum}{\mathfrak n_{\dim}}
\newcommand{\mySelf}{\mathfrak D}
\newcommand{\myfineq}{\sim_{\text{fin}}}
\newcommand{\topdim}{\dim_{\text{top}}}
\begin{document}
	\title[Dimension functions in weakly o-minimal structures]{Dimension functions in weakly o-minimal structures admitting strong cell decomposition}
	\author[M. Fujita]{Masato Fujita}
	\address{Department of Liberal Arts,
		Japan Coast Guard Academy,
		5-1 Wakaba-cho, Kure, Hiroshima 737-8512, Japan}
	\email{fujita.masato.p34@kyoto-u.jp}
	\thanks{The author was supported by JSPS KAKENHI Grant Number JP25K07109.}
	
	\begin{abstract}
		We investigate the cardinality $\myNum(\mathcal M)$ of the sets of dimension functions on weakly o-minimal structures $\mathcal M$ admitting strong cell decomposition.
	\end{abstract}
	
	\subjclass[2020]{Primary 03C64; Secondary 05C45, 54F45}
	
	\keywords{o-minimality; d-minimality; weak o-minimality; dimension}
	
	\maketitle

\section{Introduction}\label{sec:intro}

Let $\mathcal M=(M,\ldots)$ be a structure.
For $d<n$, let $\Pi_{d}^n:M^n \to M^{n-d}$ be the coordinate projection of $M^n$ forgetting the last $d$ coordinates.
We abbreviate $\Pi_1^n$ by $\Pi$ when $n$ is clear from the context.
For every $x \in \Pi_d^n(X)$, the fiber $\{y \in M^d\;|\;(x,y) \in X\}$ is denoted by $X_x^{\Pi^n_d}$.
We write $X_x^{\Pi}$ instead of $X_x^{\Pi_1^n}$.
The following is the definition of dimension function employed in this paper.
\begin{definition}[{\cite{vdD2}}]\label{def:dimension}
	Let $\mathcal M=(M,\ldots)$ be a structure.
	Let $\myDef_n(\mathcal M)$ be the collection of all subsets of $M^n$ definable in $\mathcal M$.
	Put $\myDef(\mathcal M):=\bigcup_{n>0}\myDef_n(\mathcal M)$.
	A function $\dim: \myDef(\mathcal M) \to \mathbb N \cup \{-\infty\}$, where $\mathbb N$ is the set of nonnegative integers, is called a \textit{dimension function on $\mathcal M$} if the following conditions are satisfied for every $n >0$:
	\begin{enumerate}
		\item[(1)] $\dim(\emptyset)=-\infty$, $\dim(\{a\})=0$ for each $a \in M$, $\dim(M^1)=1$.
		\item[$(2)_n$] Let $X$ and $Y$ be in $\myDef_n(\mathcal M)$.
		The equality
		\begin{align*}
			\dim(X \cup Y)=\max\{\dim(X),\dim(Y)\}
		\end{align*}
		holds.
		In particular, we have $\dim X \leq \dim Y$ if $X \subseteq Y$.
		\item[$(3)_n$] $\dim (X)=\dim(X^\sigma)$ for each $X \in \myDef_n(\mathcal M)$ and each permutation $\sigma$ of $\{1,\ldots, n\}$, where $$X^\sigma:=\{(x_{\sigma(1)},\ldots,x_{\sigma(n)}) \in M^n\;|\; (x_1,\ldots,x_n) \in X\}.$$
		\item[$(4)_n$] Suppose $n>1$. Let $X \in \myDef_n(\mathcal M)$. Put $$X(i):=\{x \in \Pi(X)\;|\; \dim X_x^{\Pi} = i\}$$ for $i=0,1$.
		Then, $X(i) \in \myDef_{n-1}(\mathcal M)$ and 
		\begin{equation*}
			\dim(X \cap \Pi^{-1}(X(i)))=\dim(X(i))+i 
		\end{equation*}
	\end{enumerate}
	
	For the technical reason, we introduce additional notations.
	$M^0$ represent a singleton and we put $\dim M^0=0$.
	For every $n >0$, a (coordinate) projection of $M^n$ onto $M^0$ means the trivial map. 
	It is obvious that $\dim X \leq n$ for every definable subset $X$ of $M^n$.
\end{definition}
The set of dimension functions on $\mathcal M$ is denoted by $\mySet(\mathcal M)$ and its cardinality is denoted by $\myNum(\mathcal M)$.

We computed $\myNum(\mathcal M)$ of several ordered structures $\mathcal M$ in \cite{Fuji_dim}.
In \cite[Section 3.3]{Fuji_dim}, we consider the case where $\mathcal M$ is o-minimal and obtain several nontrivial results.
These results are generalized to the case where the structure is a weakly o-minimal structure admitting strong cell decomposition.
Although the author believes that these generalizations are worth being public, they are less appealing than the o-minimal case, and in order to follow the proofs of the generalizations, readers should be familiar with several technical concepts used in the study of weakly o-minimal structures. 
Therefore, the author decided to write a separate note for these generalizations.
They are discussed in Section \ref{sec:gen}.

Let $\mathcal M$ be a weakly o-minimal structure $\mathcal M=(M,<,\ldots)$ admitting strong cell decomposition.
In \cite{Wencel}, Wencel constructed an o-minimal structure $\overline{\mathcal M}=(\overline{M},<,\ldots)$ satisfying the following:
\begin{itemize}
	\item $M$ is dense in $\overline{M}$,
	\item $\overline{X} \cap M^n$ is $\mathcal M$-definable for $\overline{X} \in \myDef_n(\overline{\mathcal M})$;
	\item For every $X \in \myDef_n(\mathcal M)$, there exists $\overline{X} \in \myDef_n(\overline{\mathcal M})$ such that $X=\overline{X} \cap M^n$. 
\end{itemize}
The structure $\overline{\mathcal M}$ is called the \textit{canonical o-minimal extension} of $\mathcal M$.
The relation between $\myNum(\mathcal M)$ and $\myNum(\overline{\mathcal M})$ is clarified in Section \ref{sec:ori}.
The inequality $\myNum(\mathcal M) \geq \myNum(\overline{\mathcal M})$ holds, but the equality $\myNum(\mathcal M) = \myNum(\overline{\mathcal M})$ does not hold in general.

Throughout, unless explicitly stated, $\mathcal M=(M,<,\ldots)$ is a weakly o-minimal structure admitting strong cell decomposition and $\overline{\mathcal M}=(\overline{M},<,\ldots)$ is the canonical o-minimal extension of $\mathcal M$. 
`definable' means `definable in the structure with parameters.'
An open interval is a nonempty set of the form $\{x \in M\;|\; a < x < b\}$ for some $a,b \in M \cup \{\pm \infty\}$.
It is denoted by $(a,b)$ in this paper.
We use the same notation for the pair of elements.
Readers will not be confused due to this abuse of notations.
The set $M$ is equipped with the order topology induced from the order $<$. 
The Cartesian product $M^n$ is equipped with the product topology of the order topology.

This paper uses the notions defined in Andrew's paper Wencel's papers \cite{Wencel, Wencel2}.
Some of them such as canonical o-minimal extension, strong cells, basic cells and CE cells have long definitions.
Quoting these definitions makes this paper unnecessarily long.
To avoid this, we do not give these definitions in this paper.
Readers are recommended to read this paper having those references at hand.

\section{Generalization of Proposition 3.19 in \cite{Fuji_dim} to weakly o-minimal structures admitting strong cell decomposition}\label{sec:gen}

In this section, we generalize the results in \cite[Section 3.3]{Fuji_dim} to the case where $\mathcal M$ is a weakly o-minimal structure admitting strong cell decomposition.

\begin{lemma}\label{lem:weak0}
	Suppose $\mathcal M=(M,<,\ldots)$ is an expansion of a linear order.
	Let $X \subseteq M$ be a definable convex open subset of $M$.
	Let $\dim$ be a dimension function on $\mathcal M$.
	If $\dim I=0$ for every bounded open interval $I$ contained in $X$, the equality $\dim X=0$ holds.
\end{lemma}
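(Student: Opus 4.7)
My plan is to compute $\dim(X\times X)$ in two different ways via the axioms $(2)_n$, $(3)_n$, $(4)_n$ and derive the identity $2\dim X=\dim X$, which forces $\dim X=0$.

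Since $X$ is convex and open, write $X=(a,b)$ with $a,b\in M\cup\{\pm\infty\}$. If both $a,b\in M$, then $X$ is itself a bounded open interval contained in $X$, and the hypothesis gives $\dim X=0$ at once. Otherwise $X$ has at least one infinite endpoint; consider first the case that it has exactly one such, say $X=(a,\infty)$ with $a\in M$ (the case $X=(-\infty,b)$ with $b\in M$ is entirely symmetric).

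Set $Y:=X\times X\in\myDef_2(\mathcal M)$. Applying $(4)_n$ to $Y$ via $\Pi$ directly: every fiber over $x\in X$ is $X$ itself, so $Y(\dim X)=X$, giving $\dim Y=\dim X+\dim X$. On the other hand, split $Y$ by the order trichotomy into $Y_{<}=\{(x,y)\in X^2:x<y\}$, $\Delta=\{(x,x):x\in X\}$, and $Y_{>}=\{(x,y)\in X^2:x>y\}$. The $\Pi$-fibers of $\Delta$ are singletons, so $\dim\Delta=\dim X$. The $\Pi$-fibers of $Y_{>}$ are of the form $(a,x)$, which are bounded open intervals contained in $X$ (using $a\in M$), and hence have dimension $0$ by hypothesis; this gives $\dim Y_{>}=\dim X$. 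Finally, letting $\sigma$ be the transposition, $Y_{<}^{\sigma}=Y_{>}$, so $(3)_n$ yields $\dim Y_{<}=\dim Y_{>}=\dim X$. Combining these with $(2)_n$, $\dim Y=\max\{\dim Y_{<},\dim\Delta,\dim Y_{>}\}=\dim X$. Equating the two expressions for $\dim Y$ produces $\dim X+\dim X=\dim X$, forcing $\dim X=0$.

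It remains to treat $X=M$. For any $c\in M$, both $(-\infty,c)$ and $(c,\infty)$ inherit the lemma's hypothesis from $X=M$ (bounded open subintervals of either are bounded open intervals of $M$), so the previous case gives $\dim(-\infty,c)=\dim(c,\infty)=0$. Since $M=(-\infty,c)\cup\{c\}\cup(c,\infty)$, axiom $(2)_n$ then yields $\dim M=0$. The sole technical obstacle is identifying, for each piece of the decomposition of $X\times X$, a coordinate projection whose fibers are bounded open subintervals of $X$; this is precisely where the finite endpoint of $X$ is used, and it is also the reason the case $X=M$ must be reduced first to the half-line case at an arbitrary point $c\in M$.
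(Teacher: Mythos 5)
Your central device---computing $\dim(X\times X)$ in two ways to force $2\dim X=\dim X$---is sound, and each individual application of axioms $(2)_2$, $(3)_2$, $(4)_2$ is correct. The gap is the very first step, ``write $X=(a,b)$ with $a,b\in M\cup\{\pm\infty\}$.'' In an expansion of a bare linear order, and in particular in a weakly o-minimal structure that is not o-minimal, a definable convex open set need not be an open interval in the paper's sense: one or both of its endpoints may be cuts of $M$ realized by no element of $M\cup\{\pm\infty\}$. Your three cases (both endpoints in $M$; exactly one endpoint infinite; $X=M$) therefore do not exhaust the possibilities, and the omitted case is exactly the one the lemma exists for: in the proof of Lemma \ref{lem:omin_dimform_weak} it is applied to sets such as $\{x\in J\;|\;c_1<x<c_2\}$ with $c_1,c_2$ possibly in $\overline{M}\setminus M$, and in the structure of Proposition \ref{prop:path} the set $\{x\in\mathbb Q\;|\;\neg U(0,x)\text{ and }U(x,2x)\}$-type constructions produce convex sets like $\{x\in\mathbb Q\;|\;\pi<x<2\pi\}$. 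For such an $X$ your argument fails as written: the $\Pi$-fibers of $Y_>$ are $\{y\in X\;|\;y<x\}$, whose left endpoint is the unrealized cut, and the fibers of $Y_<$ have the other cut as right endpoint, so neither family consists of bounded open intervals and the hypothesis cannot be invoked on either.

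The repair is the device you already deploy for $X=M$: fix any $c\in X$ and split $X$ into $X^-:=\{x\in X\;|\;x<c\}$, $\{c\}$ and $X^+:=\{x\in X\;|\;x>c\}$. On $X^+$ the $\Pi$-fibers of $Y_>$ are the genuine bounded open intervals $(c,x)\subseteq X$, so your computation ($\dim\Delta=\dim X^+$, $\dim Y_<=\dim Y_>\leq\dim X^+$, hence $2\dim X^+=\dim(X^+\times X^+)=\dim X^+$) goes through verbatim; $X^-$ is symmetric, and $(2)_1$ then gives $\dim X=0$. Two further remarks, both harmless: what your fiber computation actually yields is $\dim Y_>=\dim\Pi(Y_>)\leq\dim X$ rather than equality, which suffices because $\Delta$ supplies the matching lower bound; and when $X=M$ your conclusion $\dim M=0$ contradicts axiom (1), so that case is vacuous, though the derivation of the stated conclusion remains formally valid. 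With the splitting amendment your argument becomes a self-contained substitute for the paper's proof, which instead just cites the exhaustion lemma \cite[Lemma 3.2]{Fuji_dim} for definable increasing families and omits the details.
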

\begin{proof}
	Apply \cite[Lemma 3.2]{Fuji_dim}.
	The proof is a routine and we omit the proof.
\end{proof}

\begin{lemma}\label{lem:weak_omin_basic}
	The dimension function on a weakly o-minimal structure is completely determined by its restrictions to the set of all bounded open intervals.
\end{lemma}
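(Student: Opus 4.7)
The plan is to prove by induction on $n \geq 1$ that if two dimension functions $\dim_1$ and $\dim_2$ on $\mathcal M$ agree on every bounded open interval, then they agree on all of $\myDef_n(\mathcal M)$. The base case exploits weak o-minimality and Lemma \ref{lem:weak0}; the inductive step uses strong cell decomposition together with axiom $(4)_n$ to reduce computation of $\dim$ on $M^n$ to lower-dimensional data.

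For the base case $n = 1$, I would first use weak o-minimality to write an arbitrary $X \in \myDef_1(\mathcal M)$ as a finite union of definable convex subsets of $M$, reducing via axiom $(2)_1$ to a single convex set $C$. If $|C| \leq 1$, axiom (1) determines $\dim C$. Otherwise $C$ has nonempty interior and $C \setminus \myint(C)$ consists of at most two points, so $(2)_1$ and (1) give $\dim C = \dim \myint(C)$. Since $\dim \myint(C) \in \{0,1\}$, Lemma \ref{lem:weak0} and its contrapositive imply that $\dim \myint(C) = 0$ if and only if every bounded open subinterval of $\myint(C)$ has dimension $0$; hence $\dim C$ is pinned down by the restriction of $\dim$ to bounded open intervals.

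For the inductive step, assume the claim at level $n-1$. Given $X \in \myDef_n(\mathcal M)$, I would apply strong cell decomposition and $(2)_n$ to reduce to a single strong cell $C$. Axioms $(4)_n$ and $(2)_n$ together yield
\[
\dim C = \max\bigl(\dim C(0),\, \dim C(1) + 1\bigr),
\]
where $C(i) = \{x \in \Pi(C) \mid \dim C^\Pi_x = i\}$, and $\Pi(C) = C(0) \cup C(1)$ since every nonempty fiber $C^\Pi_x \subseteq M$ has dimension $0$ or $1$. The fibers $C^\Pi_x$ lie in $\myDef_1(\mathcal M)$, so by the base case $\dim_1 C^\Pi_x = \dim_2 C^\Pi_x$ for every $x$; consequently $C(0)$ and $C(1)$ are the same subsets of $M^{n-1}$ for both dimension functions. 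Since these sets lie in $\myDef_{n-1}(\mathcal M)$, the inductive hypothesis applies, and the displayed formula yields $\dim_1 C = \dim_2 C$.

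The main obstacle is the base case: one must handle arbitrary definable convex open subsets of $M$ whose endpoints may be Dedekind cuts living only in $\overline{M} \setminus M$, and it is precisely Lemma \ref{lem:weak0} that bridges this to the finer class of bounded open intervals with endpoints in $M$. Once the one-dimensional level is under control, the inductive step is a formal manipulation of $(4)_n$; strong cell decomposition enters only to guarantee the clean fiber structure (each fiber is a one-dimensional definable set) that makes this manipulation possible.
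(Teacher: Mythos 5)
Your proof is correct and follows essentially the same route as the paper: reduce to definable subsets of $M$ (the paper outsources your explicit $(4)_n$-induction to \cite[Corollary 3.3]{Fuji_dim}), decompose into convex pieces by weak o-minimality, and settle the convex open case by Lemma \ref{lem:weak0} (which the paper instead re-derives inline via the family $Y\langle a\rangle$ and \cite[Lemma 3.2]{Fuji_dim}). One minor remark: strong cell decomposition is not actually needed in your inductive step, since the fiber argument with $X(0)$ and $X(1)$ applies to an arbitrary definable $X \subseteq M^n$ --- every nonempty fiber automatically has dimension $0$ or $1$.
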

\begin{proof}
	Let $\dim$ be a dimension function on $\mathcal M$.
	Suppose that the restriction of $\dim$ to the set of all bounded open intervals are given.
	By \cite[Corollary 3.3]{Fuji_dim}, we only have to show that, for every nonempty bounded subset $X$ of $M$, $\dim X=1$ if and only if $X$ contains an open interval $I$ with $\dim I=1$. 
	
	The `if' part is obvious from Definition \ref{def:dimension}(2).
	To show that `only if' part, suppose that $\dim I=0$ for every open interval $I$ contained in $X$.
	We only have to show $\dim X=0$.
	If $X$ is a finite set, we have $\dim X=0$ by Definition \ref{def:dimension}(1,2).
	Since $\mathcal M$ is weakly o-minimal, $X$ is a union of a finite set and finitely many open convex sets.
	By Definition \ref{def:dimension}, we may assume that $X$ is open convex set.
	
	Let $a \in X$.
	Consider $Y \langle a \rangle :=\{x \in X\;|\; x>a\}$.
	Observe that, for every $b \in Y\langle a \rangle$, $\{x \in Y\langle a \rangle \;|\; x<b\}$ is the open interval $(a,b)$ contained in $X$.
	We have  $\dim \{x \in Y\langle a \rangle \;|\; x<b\}=0$ by the case assumption.
	We get $\dim Y\langle a \rangle=0$ by \cite[Lemma 3.2]{Fuji_dim}.
	Apply \cite[Lemma 3.2]{Fuji_dim} to the family $\{Y\langle a \rangle\}_{a \in X}$, then we get $\dim X=0$.
\end{proof}

We recall how to construct a new dimension function.

\begin{definition}[{\cite[Definition 3.11]{Fuji_dim}}]\label{def:new_dim}
	Let $\mathcal M$ be a structure and $\dim:\myDef(\mathcal M) \to \mathbb N \cup \{-\infty\}$ be a function satisfying conditions (1), $(2)_1$ and $(4)_n$ for $n>1$ in Definition \ref{def:dimension}.
	Let $I$ be a definable subset of $M$ with $\dim I=1$.
	
	Put $I_0:=I$ and $I_1 := M \setminus I$. 
	For every positive integer $n$, $0<k \leq n$ and $\tau \in \{0,1\}^n$, the $k$-th coordinate of $\tau$ is denoted by $\tau(k)$ and put $I \langle \tau \rangle:=\prod_{i=1}^{n} I_{\tau(i)}$.
	Recall that $\Pi:M^n \to M^{n-1}$ be the projection forgetting the last coordinate.
	We define the dimension function $$\mydim [ \dim, I ]:\myDef(\mathcal M) \to \mathbb N \cup \{-\infty\}$$ as follows:
	
	Let $X$ be a definable subset of $M^n$.
	\begin{itemize}
		\item We put $\mydim [ \dim, I ](X)=-\infty$ if $X=\emptyset$;
		\item Let $\tau \in \{0,1\}^n$ and assume that $\emptyset \neq X \subseteq I\langle \tau \rangle$.
		\begin{itemize}
			\item Suppose $n=1$. If $\tau(1)=0$, $\mydim [ \dim, I ](X):=\dim X$.
			
			Put $\mydim [ \dim, I ](X):=0$ if $\tau(1)=1$. 
			\item Suppose $n>1$. If $\tau(n)=0$, put 
			\begin{align*}
				&S_1(X):=\{x \in \Pi(X)\;|\; \dim X_x^{\Pi} =1 \} \text{ and }\\
				&S_0(X):=\Pi(X) \setminus S_1(X).
			\end{align*}
			We define $\mydim [ \dim, I ](X):=\max\{\dim S_0(X), \dim S_1(X)+1\}$.
			
			If $\tau(n)=1$, put $\mydim [ \dim, I ](X)=\mydim [ \dim, I ](\Pi(X))$.
		\end{itemize}
		\item If $X$ is not empty, we put $$\mydim [ \dim, I ](X):=\max\{\mydim[\dim, I](X \cap I\langle \tau \rangle)\;|\; \tau \in \{0,1\}^n\}.$$
	\end{itemize}
	
	In this paper, we mainly consider the case in which $\dim=\topdim$, where $\topdim$ is the topological dimension defined in \cite[Example 1.2]{Fuji_dim}.
	If $\dim=\topdim$, we write $\mydim[I]$ instead of $\mydim [ \dim, I ]$.
\end{definition}

%\begin{lemma}[{\cite[Lemma 3.12]{Fuji_dim}}]\label{lem:new_dim}
%	Let $\mathcal M$, $\dim:\myDef(\mathcal M) \to \mathbb N \cup \{-\infty\}$ and $I$ be as in Definition \ref{def:new_dim}.
%	The function $\mydim[\dim, I]$ defined in Definition \ref{def:new_dim} satisfies condition $(1)$, $(2)_1$ and $(4)_n$ for $n>1$ in Definition \ref{def:dimension}.
%\end{lemma}

The following lemma is a generalization of \cite[Lemma 3.14]{Fuji_dim}.
In \cite[Lemma 3.14]{Fuji_dim}, $\mathcal M$ is assumed to be o-minimal.
In order to prove this generalization, we need more effort than the proof of \cite[Lemma 3.14]{Fuji_dim}.
We give a complete proof for this lemma.
\begin{lemma}\label{lem:omin_dimform_weak}
	Let $\mathcal M=(M,<,\ldots)$ be a weakly o-minimal structure admitting strong cell decomposition.
	Every dimension function on $\mathcal M$ is of the form $\mydim[I]$ for some definable subset $I$ of $M$.
\end{lemma}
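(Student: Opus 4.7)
The strategy is to extract from $\dim$ a definable ``$1$-dimensional part'' $I$ of $M$ and then verify $\dim = \mydim[I]$. By Lemma~\ref{lem:weak_omin_basic}, it suffices to compare the two dimension functions on bounded open intervals.

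The candidate for $I$ is built from the set
\begin{equation*}
	V := \{x \in M : \exists a, b \in M,\ a < x < b \text{ and } \dim((a,b)) = 0\},
\end{equation*}
by setting $I := M \setminus V$. Definability of $V$ follows from clause $(4)_n$ of Definition~\ref{def:dimension} applied to the definable set $\{(a,b,x) \in M^3 : a < x < b\}$ and the projection forgetting $x$: the set of parameters $(a,b)$ whose fiber is $0$-dimensional is definable, and $V$ is the image, under projection onto the $x$-coordinate, of the corresponding subfamily. Any witness $(a,b)$ for a point of $V$ witnesses every point of $(a,b)$, so $V$ is open, and hence $I$ is definable.

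The main technical step, and the principal obstacle I expect, is to show $\dim V = 0$. Writing $V$ as a finite union of definable convex open pieces $V_1, \ldots, V_l$ by weak o-minimality and applying $(2)_n$ reduces the claim to $\dim V_j = 0$ for each $j$, which by Lemma~\ref{lem:weak0} amounts to proving $\dim K = 0$ for every bounded open interval $K \subseteq V_j$. Such a $K$ is covered by the definable family
\begin{equation*}
	\{(c, d) \in M^2 : c < d,\ (c, d) \subseteq K,\ \dim((c, d)) = 0\},
\end{equation*}
whose members are all $0$-dimensional. Since the axioms of a dimension function supply only finite additivity, concluding $\dim K = 0$ from this potentially infinite definable cover requires a covering lemma for $0$-dimensional sets, and I would invoke \cite[Lemma 3.2]{Fuji_dim} here, exactly as in the proof of Lemma~\ref{lem:weak_omin_basic}.

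Once $\dim V = 0$ is in hand, axiom~(1) and $(2)_n$ applied to $M = V \cup I$ force $\dim I = 1$, so $\mydim[I]$ is well-defined. For the comparison on a bounded open interval $J$: if $\dim J = 0$, then $J$ itself witnesses $J \subseteq V$, hence $J \cap I = \emptyset$ and $\mydim[I](J) = 0$; if $\dim J = 1$, then $(2)_n$ together with $\dim(J \cap V) \leq \dim V = 0$ forces $\dim(J \cap I) = 1$, and by weak o-minimality $J \cap I$ has a convex component of dimension $1$, which contains more than one point and hence, by density of the order on $M$, a nonempty open subinterval. Therefore $\topdim(J \cap I) = 1$, whence $\mydim[I](J) = 1 = \dim J$, completing the agreement on bounded open intervals.
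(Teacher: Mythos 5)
Your reduction to bounded open intervals via Lemma~\ref{lem:weak_omin_basic}, your choice of $I$ (which agrees with the paper's up to the relevant equivalence), and the easy direction of the comparison are all fine. The genuine gap is exactly where you flag ``the principal obstacle'': the claim $\dim V=0$, i.e.\ $\dim K=0$ for a bounded open interval $K$ each of whose points has \emph{some} $0$-dimensional definable neighbourhood. You propose to conclude this from the definable cover $\{(c,d)\subseteq K : \dim((c,d))=0\}$ by invoking \cite[Lemma 3.2]{Fuji_dim} ``exactly as in the proof of Lemma~\ref{lem:weak_omin_basic}.'' But every use of that lemma in this paper is applied to a \emph{monotone chain} of definable sets (e.g.\ $(a,b)$ increasing in $b$, or $Y\langle a\rangle$ decreasing in $a$), and in Lemma~\ref{lem:weak_omin_basic} the hypothesis is that \emph{every} open subinterval of $X$ has dimension $0$, which is what makes those chains available. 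Your cover is neither a chain nor directed: two disjoint $0$-dimensional witnessing intervals need not be contained in a common $0$-dimensional interval, because the gap between them lies in $V$ only in the pointwise sense you are trying to upgrade. No covering lemma can hold for arbitrary definable families of $0$-dimensional sets (consider $M=\bigcup_{a\in M}\{a\}$), so some directedness or fiber-dimension hypothesis is essential, and your family does not visibly satisfy one. This local-to-global step is precisely where the paper spends almost all of its effort.

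Concretely, the paper's proof fills this gap by introducing $f(x):=\sup\{y : x<y<b,\ \dim((x,y))=0\}\in\overline{M}$ on $J=(a,b)$, applying Wencel's strong monotonicity \cite[Lemma 2.6(b)]{Wencel} to show $f$ is piecewise constant with finite image $Q$, proving $f(x)=\inf\{q\in Q : x<q\}$, and then ruling out $|Q|\geq 2$ and $Q\neq\{b\}$ by dimension-function arithmetic. Only after establishing $Q=\{b\}$ does one obtain the \emph{chains} $(x,y)$ ($y\uparrow b$) and $(t,b)$ ($t\downarrow a$) of $0$-dimensional intervals to which \cite[Lemma 3.2]{Fuji_dim} legitimately applies. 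Your proposal needs an argument of this kind (or some other mechanism converting the pointwise cover into a directed one) before the appeal to \cite[Lemma 3.2]{Fuji_dim} is justified; as written, the central step is asserted rather than proved.
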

\begin{proof}
	Let $\dim$ be a dimension function on $\mathcal M$.
	Put $$I:=\{a \in M\;|\; \forall b \ (a<b) \rightarrow \dim((a,b))=1\}.$$
	$I$ is definable by Definition \ref{def:dimension}(4).
	\medskip
	
	\textbf{Claim 1.} For every $a \in I$, there exists $b>a$ such that $(a,b) \subseteq I$.
	\begin{proof}[Proof of Claim 1]
		Let $a \in I$.
		Assume for contradiction that such $b$ does not exist.
		Because of weak o-minimality, $M \setminus I$ is a union of a finite set and finitely many open convex sets.
		Non-existence of $b$ implies that there exists $c>a$ such that $(a,c) \cap I=\emptyset$. 
		Since $a \in I$, we have $\dim ((a,c))=1$.
		On the other hand, we have $\dim ((d,c))=0$ for every $a<d<c$ by the definition of $I$.
		By \cite[Lemma 3.2]{Fuji_dim}, we have $\dim((a,c))=0$, which is absurd.
	\end{proof}
	
	We want to show that $\dim=\mydim[I]$.
	For that purpose, by Lemma \ref{lem:weak_omin_basic}, we have only to show that, for every bounded open interval $J$, $\dim(J)=1$ if and only if $I \cap J$ has a nonempty interior.
	The `if' part is easy from the definition of $I$.
	We show the `only if' part.
	Suppose $I \cap J$ has an empty interior.
	We want to show $\dim J=0$.
	
	We show that $I \cap J$ is an empty set.
	Assume for contradiction that $I \cap J$ is not empty and has an empty interior.
	Because of weak o-minimality, $I \cap J$ is a finite set.
	Choose $a \in I \cap J$.
	Then we can choose $b>a$ such that $b \in J$ and $(a,b) \cap I=\emptyset$, which contradicts Claim 1.
	
	Let $J=(a,b)$, where $a,b \in M$.
	Let us consider the map $f:J \to \overline{M}$ given by $$f(x):=\sup\{y \in M\;|\; x<y<b, \dim((x,y))=0\}.$$
	It is definable by Definition \ref{def:dimension}(4).
	Since $J \subseteq M \setminus I$, we have $x < f(x)$ for every $x \in J$.
	Put $X \langle x \rangle :=\{ y \in M\;|\; x<y<f(x)\}$ for every $x \in J$.
	We have $\dim X \langle x \rangle=0$ by \cite[Lemma 3.2]{Fuji_dim}.
	By the strong monotonicity \cite[Lemma 2.6(b)]{Wencel}, we can decompose $J$ into a finite set $F$ and finitely many open convex sets $K_1,\ldots, K_k$ such that the restriction $f|_{K_i}$ of $f$ to $K_i$ is continuous and monotone for $1 \leq i \leq k$.
	
	We show that $f|_{K_i}$ is constant.
	Let $x,x' \in K_i$ with $x<x'<f(x)$.
	We have $\dim ((x',y))=0$ for every $y \in J$ with $x'<y<f(x)$ by Definition \ref{def:dimension}(2) because $\dim((x,y))=0$ by the definition of $f(x)$.
	This observation implies $f(x) \leq f(x')$.
	Therefore, $f|_{K_i}$ is either constant or strictly increasing.
	Assume for contradiction that $f|_{K_i}$ is strictly increasing.
	Let $x,x' \in K_i$ with $x<x'<f(x)$.
	We have $f(x')>f(x)$.
	Choose $y \in J$ with $f(x)<y<f(x')$.
	We have $\dim((x,x'))=0$ because $x<x'<f(x)$ and $\dim((x',y))=0$ because $x'<y<f(x')$.
	By Definition \ref{def:dimension2}(1,2), we get $\dim ((x,y))=\max\{\dim((x,x')),\dim(\{x'\}),\dim((x',y))\}=0$.
	On the other hand, $y>f(x)$, which contradicts the definition of $f(x)$.
	We have shown that $f|_{K_i}$ is constant.

	Since $f|_{K_i}$ is constant for every $1 \leq i \leq k$, $Q:=f(J) \subseteq \overline{M}$ is a finite set.
	We want to show $f(x)=\inf\{q \in Q\;|\; x<q\}$ for every $x \in J$.
	
	Assume for contradiction that $f(x) \neq \inf\{q \in Q\;|\; x<q\}$.
	There exists $q \in Q$ such that $x<q<f(x)$.
	Since $Q=f(J)$, there exists $x' \in J$ such that $q=f(x')$.
	If $x'>x$, we have $x<x'<f(x')=q$.
	Choose $y \in J$ with $q<y<f(x)$.
	We have $\dim ((x,y))=0$ by the definition of $f(x)$.
	Then we get $\dim((x',y))=0$ by Definition \ref{def:dimension}(2) because $(x',y) \subseteq (x,y)$, which contradicts the inequality $f(x')<y$ and the definition of $f$.
	Next suppose $x'<x$.
	Choose $y \in J$ such that $q<y<f(x)$.
	We have $\dim ((x',x))=0$ because $x<f(x')$.
	We have $\dim((x,y))=0$ because $y<f(x)$.
	By Definition \ref{def:dimension}(1,2), we have $\dim((x',y))=0$, which contradicts the inequality $f(x')<y$.
	We have shown the equality $f(x)=\inf\{q \in Q\;|\; x<q\}$.

	We want to show $Q=\{b\}$.
	First assume for contradiction that $Q$ has at least two points.
	Let $c_1$ be the leftmost point of $Q$ and $c_2$ be the second leftmost point of $Q$.
	We have $f(x)=c_1$ for every $x \in J$ with $a<x<c_1$ and $f(x)=c_2$ for every $x \in J$ with $c_1 \leq x < c_2$.
	By Lemma \ref{lem:weak0}, $\dim (\{x \in J\;|\;a<x<c_1\})=0$ and $\dim (\{x \in J\;|\;c_1<x<c_2\})=0$.
	By Definition \ref{def:dimension}(1,2), we get $\dim(\{x \in J\;|\; a<x<c_2\})=0$ in both the cases in which $c_1 \in M$ and $c_1 \in \overline{M} \setminus M$.
	Choose $d_1,d_2 \in J$ so that $d_1<c_1<d_2<c_2$.
	We have $\dim((d_1,d_2))=0$, which contradicts $f(d_1)=c_1<d_2$.
	We have shown that $Q$ is a singleton.
	
	If the unique point $q_0$ is smaller than $b$.
	We have $x>f(x)$ for every $x \in J$ with $q_0<x<b$.
	This inequality contradicts the inequality $x<f(x)$, which has been already proved.
	We have shown that $Q=\{b\}$.
	This implies $\dim((t,b))=0$ for every $t \in M$ with $a<t<b$.
	By \cite[Lemma 3.2]{Fuji_dim}, we have $\dim J=\dim ((a,b))=0$.
\end{proof}

Let $\mathcal M=(M,<,\ldots)$ be a weakly o-minimal structure admitting strong cell decomposition.
A definable subset $I$ of $M$ is called \textit{self-sufficient} if, it is infinite and, for every infinite definable sets $J_1 \subseteq I$ and $J_2 \subseteq M \setminus I$, there does not exist a definable bijection between $J_1$ and $J_2$. 
Observe that, if $M \setminus I$ has an empty interior, $I$ is self-sufficient.

The following two lemmas are generalizations of \cite[Lemma 3.15, Lemma 3.16]{Fuji_dim}.
The proofs are almost same as those of the original lemmas.
We use the strong monotonicity \cite[Lemma 2.6(b)]{Wencel} instead of the monotonicity theorem for o-minimal structures.
Therefore, we omit the proofs of these lemmas.

\begin{lemma}\label{lem:self_weak}
	Let $\mathcal M=(M,<,\ldots)$ be a weakly o-minimal structure admitting strong cell decomposition property.
	An infinite definable subset $I$ of $M$ is self-sufficient if and only if, for every definable subset $J$ of $I$ and every definable function $f:J \to M \setminus I$, the image $f(J)$ is a finite set.
\end{lemma}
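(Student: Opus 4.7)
The plan is to prove the two implications separately, with the forward direction reducing, via weak o-minimality and the strong monotonicity theorem \cite[Lemma 2.6(b)]{Wencel}, to the case where $f$ is strictly monotone on a single convex piece.

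For the $(\Leftarrow)$ direction, I would argue by contraposition. Suppose $I$ is not self-sufficient. Then there exist infinite definable sets $J_1 \subseteq I$ and $J_2 \subseteq M \setminus I$ together with a definable bijection $g:J_1 \to J_2$. Taking $J := J_1$ and $f := g$ viewed as a function into $M \setminus I$ yields a definable function whose image $J_2$ is infinite, contradicting the assumed property.

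For the $(\Rightarrow)$ direction, suppose $I$ is self-sufficient and let $f:J \to M \setminus I$ be a definable function with $J \subseteq I$. Assume for contradiction that $f(J)$ is infinite. By weak o-minimality, $J$ is the union of a finite set and finitely many definable convex sets, so at least one such convex piece $J'$ satisfies that $f(J')$ is infinite. Applying the strong monotonicity theorem \cite[Lemma 2.6(b)]{Wencel} to $f|_{J'}$, we obtain a partition of $J'$ into a finite set and finitely many definable convex pieces $K_1,\ldots,K_m$ on each of which $f|_{K_i}$ is continuous and either constant or strictly monotone. Since $f(J')$ is infinite, some $f|_{K_j}$ must be strictly monotone, hence a definable injection. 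Then $f|_{K_j}$ is a definable bijection between the infinite definable set $K_j \subseteq I$ and the infinite definable set $f(K_j) \subseteq M \setminus I$, contradicting self-sufficiency.

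The only real subtlety I foresee is confirming that the conclusion of strong monotonicity in \cite{Wencel} genuinely applies to an arbitrary definable convex subset of $M$ (not merely to an interval with endpoints in $M$), but this is standard in the weakly o-minimal setting and is exactly the substitute the authors indicate should be made. With that in hand, the argument is the direct analogue of the o-minimal proof of \cite[Lemma 3.15]{Fuji_dim}, which aligns with the authors' remark that the proofs here are essentially the same as the originals.
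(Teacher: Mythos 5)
Your proof is correct and follows exactly the route the paper intends: the paper omits the proof of this lemma, stating only that it is the same as \cite[Lemma 3.15]{Fuji_dim} with the strong monotonicity theorem \cite[Lemma 2.6(b)]{Wencel} substituted for the o-minimal monotonicity theorem, which is precisely the reduction you carry out.
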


\begin{lemma}\label{lem:omin_equiv_weak}
	Let $\mathcal M=(M,<,\ldots)$ be a weakly o-minimal structure admitting strong cell decomposition.
	Let $I \subseteq M$ be a nonempty definable set having nonempty interior.
	The function $\mydim[I]$ is a dimension function if and only if $I$ is self-sufficient.
\end{lemma}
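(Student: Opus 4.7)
The plan is to prove the equivalence by treating each direction separately. Axioms~(1), $(2)_1$ and $(4)_n$ of Definition~\ref{def:dimension} for $\mydim[I]$ are built directly into the defining formulas of Definition~\ref{def:new_dim}, so the substance of the argument lies in the permutation invariance $(3)_n$ and the union axiom $(2)_n$ for $n>1$.

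For the only-if direction I argue by contrapositive. Assuming $I$ is not self-sufficient, Lemma~\ref{lem:self_weak} yields a definable $J\subseteq I$ and a definable $f\colon J\to M\setminus I$ with $f(J)$ infinite. By the strong monotonicity theorem \cite[Lemma 2.6(b)]{Wencel}, I restrict $f$ to a convex piece $J_1\subseteq J$ on which it is continuous and strictly monotone and whose image $J_2:=f(J_1)$ is infinite; then $J_1$ and $J_2$ each contain an open interval of $M$, so $\topdim(J_1)=\topdim(J_2)=1$. Let $g:=f|_{J_1}$ and $\Gamma$ be its graph, so $\Gamma\subseteq I\times(M\setminus I)$. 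The $\tau=(0,1)$ branch of Definition~\ref{def:new_dim} gives
\[
\mydim[I](\Gamma)=\mydim[I](\Pi(\Gamma))=\mydim[I](J_1)=\topdim(J_1)=1,
\]
while for the swap $\Gamma^{\sigma}\subseteq(M\setminus I)\times I$ every $\Pi$-fiber is the singleton $\{g^{-1}(y)\}$, so $S_1(\Gamma^{\sigma})=\emptyset$ and the $\tau=(1,0)$ branch collapses to $\mydim[I](\Gamma^{\sigma})=\mydim[I](J_2)=0$, contradicting $(3)_2$.

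For the if direction I assume $I$ is self-sufficient and induct on $n$, targeting the projection formula
\[
\mydim[I](X)=\topdim(\pi_{\tau}(X)), \qquad X\subseteq I\langle\tau\rangle,
\]
where $\pi_{\tau}\colon M^n\to M^{k}$ projects onto the coordinates $i$ with $\tau(i)=0$. Once this holds, a coordinate permutation $\sigma$ sends $I\langle\tau\rangle$ to $I\langle\tau\circ\sigma^{-1}\rangle$ and $\pi_{\tau\circ\sigma^{-1}}(X^{\sigma})$ is obtained from $\pi_{\tau}(X)$ merely by reindexing of $M^k$, so $(3)_n$ follows from the permutation invariance of $\topdim$, and $(2)_n$ transfers similarly. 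The $\tau(n)=1$ branch of the formula is immediate from the recursive clause of Definition~\ref{def:new_dim} and the inductive hypothesis, and the $\tau(n)=0$ branch is addressed by comparing the $\max$ over $S_0,S_1$ in the definition with the $(4)_n$-style decomposition of $\topdim(\pi_{\tau}(X))$ on $I^k$.

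The main obstacle is the $\tau(n)=0$ branch in the mixed case, where $\Pi(X)$ contains both $I$- and $(M\setminus I)$-coordinates and the slicing sets $S_i$ need not project onto sets of the same topological dimension as their $\topdim$-side counterparts without further input. This is where self-sufficiency is crucial: combining Lemma~\ref{lem:self_weak} with the strong monotonicity theorem, one verifies that a definable function from a subset of $M\setminus I$ into $I$ must also have finite image (by inverting a strongly monotone restriction of any putative counterexample and invoking Lemma~\ref{lem:self_weak} for the obtained bijection into $M\setminus I$). Consequently the definable families of fibers appearing in the induction behave trivially along every $M\setminus I$-coordinate, and the strong cell decomposition from \cite{Wencel} allows uniform enough bookkeeping of these fibers to match the two computations of the topological dimension.
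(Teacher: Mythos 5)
The paper does not actually prove this lemma: it states that the argument is ``almost the same'' as \cite[Lemma 3.16]{Fuji_dim} with the strong monotonicity theorem \cite[Lemma 2.6(b)]{Wencel} substituted for the o-minimal monotonicity theorem, and omits the details. Your proposal is consistent with that strategy and is correct in outline. Your ``only if'' direction is essentially complete: extracting a strictly monotone convex piece $J_1$ with infinite image via strong monotonicity is exactly what is needed (injectivity on $J_1$ is what forces $S_1(\Gamma^{\sigma})=\emptyset$, so do not skip the reduction to a strictly monotone piece), and the computation $\mydim[I](\Gamma)=1\neq 0=\mydim[I](\Gamma^{\sigma})$ kills $(3)_2$. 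Note that this computation implicitly reads the clause $\max\{\dim S_0(X),\dim S_1(X)+1\}$ of Definition~\ref{def:new_dim} recursively, i.e.\ with $\mydim[\dim,I]$ rather than the base $\dim$ applied to $S_0,S_1$; this is the reading forced by Claim~2 in the proof of Proposition~\ref{thm:correspond}, so you are interpreting the definition as intended, but you should say so. Two further points deserve attention. First, your opening claim that $(4)_n$ is ``built directly into the defining formulas'' is an overstatement: the definition tests fibers with $\topdim$ while axiom $(4)_n$ tests them with $\mydim[I]$ (these differ on fibers lying in $M\setminus I$), and a general $X$ is spread over several pieces $I\langle\tau\rangle$, so $(4)_n$ really follows only after you have the projection formula $\mydim[I](X)=\topdim(\pi_\tau(X))$ in hand. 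Second, in the ``if'' direction you correctly identify the crux --- that self-sufficiency is symmetric (definable maps from subsets of $M\setminus I$ into $I$ also have finite image, by inverting a strictly monotone piece) and that this forces $S_1(X)$ to project onto exactly the set of points over which the $\pi_\tau$-fiber is infinite --- but the multi-coordinate version of this finiteness (a definable family of finite subsets of $I$ parametrized by several $M\setminus I$-coordinates has finite union over each $\pi_{\tau'}$-fiber) is only gestured at; it needs an induction on the number of $M\setminus I$-coordinates using uniform finiteness from strong cell decomposition. These are expository gaps rather than errors; the argument goes through.
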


Let $\mathcal M=(M,<,\ldots)$ be a weakly o-minimal structure admitting strong cell decomposition property.
We define the equivalence relation $\myfineq^{\mathcal M}$ on $\myDef_1(\mathcal M)$ by $I_1 \myfineq^{\mathcal M} I_2$ if and only if $(I_1 \setminus I_2) \cup (I_2 \setminus I_1)$ is a finite set for  $I_1,I_2 \in \myDef_1(\mathcal M)$.
We drop the superscript $\mathcal M$ of $\myfineq^{\mathcal M}$ if it is clear from the context.
The following lemmas are shown in the same manner as \cite[Lemma 3.17, Lemma 3.18]{Fuji_dim}.

\begin{lemma}\label{lem:fineq_weak}
	Let $I_1,I_2 \in \myDef_1(\mathcal M)$ with $I_1 \myfineq I_2$.
	$I_1$ is self-sufficient if and only if $I_2$ is so.
\end{lemma}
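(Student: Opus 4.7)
The plan is to prove the contrapositive of one direction; the other direction follows by symmetry of $\myfineq$. Assume $I_1$ is self-sufficient; I would show $I_2$ is self-sufficient as well. Write $F := (I_1 \setminus I_2) \cup (I_2 \setminus I_1)$, which is finite by hypothesis. Since $I_1$ is infinite and $I_2 \supseteq I_1 \setminus F$, the set $I_2$ is infinite. By Lemma \ref{lem:self_weak}, it suffices to show that for every definable $J \subseteq I_2$ and every definable function $f \colon J \to M \setminus I_2$, the image $f(J)$ is finite.

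I would argue by contradiction: suppose such an $f$ has $f(J)$ infinite, and extract from $f$ a definable function from a subset of $I_1$ to $M \setminus I_1$ with infinite image, contradicting Lemma \ref{lem:self_weak} applied to $I_1$. Set
\[
J'' := \{x \in J \cap I_1 \;|\; f(x) \notin F\},
\]
a definable subset of $I_1$. Two verifications are needed. First, $f(J'') \subseteq M \setminus I_1$: for $x \in J''$ we have $f(x) \in M \setminus I_2$, and if additionally $f(x) \in I_1$ then $f(x) \in I_1 \setminus I_2 \subseteq F$, contradicting $f(x) \notin F$. Second, $f(J'')$ is infinite: since $J \setminus I_1 \subseteq I_2 \setminus I_1 \subseteq F$ is finite, $f(J \cap I_1)$ differs from $f(J)$ by at most finitely many points and is therefore infinite; and by construction $f(J \cap I_1) \setminus F \subseteq f(J'')$, so $f(J'')$ is obtained from the infinite set $f(J \cap I_1)$ by deleting only the finitely many values lying in $F$. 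Hence $f|_{J''} \colon J'' \to M \setminus I_1$ is a definable function on a definable subset of $I_1$ with infinite image, contradicting the self-sufficiency of $I_1$ via Lemma \ref{lem:self_weak}.

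There is no real obstacle here; the argument is pure bookkeeping with the finite symmetric difference $F$. The only care is to ensure that the two trimming operations, restricting $J$ to $I_1$ and removing the $F$-preimages, still leave the image infinite and inside $M \setminus I_1$, both of which rest on the observation that each trimming discards only finitely many values.
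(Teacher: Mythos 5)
Your proof is correct: the reduction to Lemma \ref{lem:self_weak}, the verification that $I_2$ is infinite, and the two trimming steps (restricting to $J\cap I_1$ and discarding the finitely many points mapping into $F$) all check out, and the set $J''$ is definable. The paper itself omits the proof, deferring to \cite[Lemma 3.17]{Fuji_dim}, and your finite-symmetric-difference bookkeeping via the function criterion of Lemma \ref{lem:self_weak} is exactly the intended style of argument.
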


\begin{lemma}\label{lem:well_defined_weak}
	Let $\mathcal M$ be a weakly o-minimal structure admitting strong cell decomposition.
	For self-sufficient sets $I_1$ and $I_2$, the equality $\mydim[I_1]=\mydim[I_2]$ holds if and only if $I_1 \myfineq I_2$.
\end{lemma}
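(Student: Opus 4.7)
The plan is to mirror \cite[Lemma 3.18]{Fuji_dim}. By Lemma \ref{lem:omin_equiv_weak} the self-sufficiency hypothesis ensures that both $\mydim[I_1]$ and $\mydim[I_2]$ are dimension functions on $\mathcal M$; hence by Lemma \ref{lem:weak_omin_basic} the equality $\mydim[I_1] = \mydim[I_2]$ is equivalent to the assertion that $\mydim[I_1](J) = \mydim[I_2](J)$ for every bounded open interval $J \subseteq M$. The entire proof therefore reduces to a comparison on bounded open intervals. As a preliminary observation I would record that for every self-sufficient $I \subseteq M$ and every bounded open interval $J$,
\[
	\mydim[I](J) = \begin{cases} 1 & \text{if $I \cap J$ has nonempty interior,} \\ 0 & \text{otherwise.} \end{cases}
\]
For the first case, any open subinterval $K \subseteq I \cap J$ satisfies $\mydim[I](K) = \topdim(K) = 1$ by the $n=1$, $\tau(1)=0$ clause of Definition \ref{def:new_dim}, and monotonicity of $\mydim[I]$ forces $\mydim[I](J) = 1$. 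For the second case, weak o-minimality forces $I \cap J$ to be finite, so $J \setminus I$ is a finite union of open subintervals of $M \setminus I$; the $\tau(1)=1$ clause assigns each such subinterval $\mydim[I]$-value $0$, and Definition \ref{def:dimension}(2) delivers $\mydim[I](J) = 0$.

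For the \emph{if} direction, assume $I_1 \myfineq I_2$. Then $I_1 \cap J$ and $I_2 \cap J$ differ by a finite set for every bounded open interval $J$, so one has nonempty interior iff the other does. The preliminary observation gives $\mydim[I_1](J) = \mydim[I_2](J)$, and the reduction above concludes.

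For the \emph{only if} direction I argue by contrapositive. If $I_1 \not\myfineq I_2$, then without loss of generality $I_1 \setminus I_2$ is infinite; by weak o-minimality it contains a nonempty open interval $K$. Since $K \subseteq I_1$, the $\tau(1)=0$ clause gives $\mydim[I_1](K) = \topdim(K) = 1$, whereas $K \subseteq M \setminus I_2$ combined with the $\tau(1)=1$ clause gives $\mydim[I_2](K) = 0$. Hence $\mydim[I_1] \neq \mydim[I_2]$. The only step demanding any care is the preliminary observation, and even there the argument is a direct unwinding of Definition \ref{def:new_dim} together with the standard fact that a definable subset of $M$ with empty interior is finite in weakly o-minimal structures; everything else is a straight application of Lemmas \ref{lem:weak_omin_basic} and \ref{lem:omin_equiv_weak}.
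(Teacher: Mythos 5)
Your proof is correct and follows essentially the route the paper intends: the paper omits the argument, deferring to the o-minimal analogue in \cite[Lemma 3.18]{Fuji_dim}, and your reduction via Lemma \ref{lem:omin_equiv_weak} and Lemma \ref{lem:weak_omin_basic} to bounded open intervals, together with the explicit computation of $\mydim[I]$ on intervals, is exactly that argument adapted to the weakly o-minimal setting. The only detail worth making explicit is that a self-sufficient set, being infinite and definable in a weakly o-minimal structure over a dense order, automatically has nonempty interior, so that $\topdim I=1$ and $\mydim[I]$ is defined and Lemma \ref{lem:omin_equiv_weak} applies.
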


Let $\mySelf(\mathcal M)$ be the set of the equivalence class of self-sufficient sets under the equivalence relation $\myfineq$.
The following corollary is easily shown in the same manner as \cite[Proposition 3.19]{Fuji_dim} using Lemma \ref{lem:well_defined_weak}, Lemma \ref{lem:omin_dimform_weak} and Lemma \ref{lem:omin_equiv_weak}. 

\begin{proposition}\label{prop:weakly_omin_equiv}
	Let $\mathcal M$ be a weakly o-minimal structure admitting strong cell decomposition property.
	Then, there exists a one-to-one correspondence of $\mySet(\mathcal M)$ with $\mySelf(\mathcal M)$. 
\end{proposition}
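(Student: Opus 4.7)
The plan is to exhibit the bijection $\Phi\colon \mySelf(\mathcal M) \to \mySet(\mathcal M)$ given on representatives by $[I] \mapsto \mydim[I]$, and then to verify well-definedness, injectivity, and surjectivity by assembling the three preceding lemmas. Since all of the technical work has already been carried out, what remains is essentially a bookkeeping argument, and the only thing to be careful about is that the three properties we need line up with the three lemmas available.

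First I would check that $\Phi$ even lands in $\mySet(\mathcal M)$ and does not depend on the chosen representative. If $I$ is self-sufficient then, in particular, $I$ is infinite; since $\mathcal M$ is weakly o-minimal, $I$ necessarily has nonempty interior (otherwise $I$ would be finite, contradicting self-sufficiency), so Lemma~\ref{lem:omin_equiv_weak} applies and guarantees that $\mydim[I]$ is a genuine dimension function. If $I_1, I_2$ are self-sufficient with $I_1 \myfineq I_2$, Lemma~\ref{lem:well_defined_weak} (together with Lemma~\ref{lem:fineq_weak}, which ensures both representatives are self-sufficient at once) gives $\mydim[I_1] = \mydim[I_2]$, so $\Phi$ is well-defined on equivalence classes.

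Injectivity is immediate from the nontrivial direction of Lemma~\ref{lem:well_defined_weak}: if $\mydim[I_1] = \mydim[I_2]$ for self-sufficient $I_1, I_2$, then $I_1 \myfineq I_2$, i.e.\ $[I_1] = [I_2]$ in $\mySelf(\mathcal M)$. For surjectivity, I would fix an arbitrary $\dim \in \mySet(\mathcal M)$ and invoke Lemma~\ref{lem:omin_dimform_weak} to produce a definable $I \subseteq M$ with $\dim = \mydim[I]$. Since $\dim$ is a dimension function, so is $\mydim[I]$, and then Lemma~\ref{lem:omin_equiv_weak} forces $I$ to be self-sufficient (and in particular to have nonempty interior). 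Thus $[I] \in \mySelf(\mathcal M)$ and $\Phi([I]) = \dim$.

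The only point where I expect any friction is the verification that the witness $I$ produced by Lemma~\ref{lem:omin_dimform_weak} satisfies the hypotheses of Lemma~\ref{lem:omin_equiv_weak} (nonempty interior, infinite) and of Definition~\ref{def:new_dim} ($\topdim I = 1$); in the present setting these are all automatic from the construction of $I$ in the proof of Lemma~\ref{lem:omin_dimform_weak}, so no real obstacle remains. The whole argument is a direct translation of the o-minimal proof of \cite[Proposition 3.19]{Fuji_dim} with the weakly o-minimal analogues of the constituent lemmas plugged in.
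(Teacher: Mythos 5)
Your proposal is correct and follows exactly the route the paper intends: the paper's own ``proof'' is just the remark that the bijection $[I]\mapsto\mydim[I]$ is assembled from Lemma~\ref{lem:omin_dimform_weak} (surjectivity), Lemma~\ref{lem:omin_equiv_weak} (well-definedness and the self-sufficiency of the witness), and Lemma~\ref{lem:well_defined_weak} (injectivity and representative-independence), as in the o-minimal case. Your attention to the side conditions (that a self-sufficient set, being infinite in a weakly o-minimal structure, has nonempty interior, and that the $I$ constructed in the proof of Lemma~\ref{lem:omin_dimform_weak} is nonempty with nonempty interior by its Claim~1) is exactly the bookkeeping the paper leaves implicit.
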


\section{$\myNum(\mathcal M)$ and $\myNum(\overline{\mathcal M})$}\label{sec:ori}
Let us investigate the relation between $\myNum(\mathcal M)$ and $\myNum(\overline{\mathcal M})$.
\begin{lemma}\label{lem:self_sufficient_gen}
	Let $\mathcal M=(M,<,\ldots)$ be a weakly o-minimal structure admitting strong cell decomposition.
	Let $\overline{\mathcal M}=(\overline{M},<,\ldots)$ be the canonical o-minimal extension of $\mathcal M$.
	Let $I$ be a self-sufficient $\overline{\mathcal M}$-definable subset of $\overline{M}$.
	Then, $I \cap M$ is a self-sufficient $\mathcal M$-definable subset of $M$.
\end{lemma}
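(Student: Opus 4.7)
The plan is to establish the self-sufficiency of $I \cap M$ by contradiction via Lemma \ref{lem:self_weak}, lifting an $\mathcal M$-definable counterexample to an $\overline{\mathcal M}$-definable one that contradicts the self-sufficiency of $I$.

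First, the easy points: $I \cap M$ is $\mathcal M$-definable by the defining property of the canonical o-minimal extension recalled in Section \ref{sec:intro}, and $I \cap M$ is infinite because, $\overline{\mathcal M}$ being o-minimal and $I$ infinite, $I$ contains an open interval of $\overline M$, which meets $M$ in infinitely many points by density.

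Now suppose toward contradiction that $I \cap M$ is not self-sufficient. By Lemma \ref{lem:self_weak}, there exist an $\mathcal M$-definable $J \subseteq I \cap M$ and an $\mathcal M$-definable function $f \colon J \to M \setminus I$ with $f(J)$ infinite. I would apply the strong monotonicity \cite[Lemma 2.6(b)]{Wencel} to produce an open interval $(a,b)_M$ of $M$ with $(a,b)_M \subseteq J \subseteq I \cap M$ on which $f$ is continuous and strictly monotone (discarding pieces where $f$ is constant, whose contribution to $f(J)$ is finite). The main step is to construct an $\overline{\mathcal M}$-definable continuous strictly monotone function $\overline f$, defined on some open interval of $\overline M$, that extends $f|_{(a,b)_M}$. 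This is where the strong cell decomposition hypothesis is essential: after further refining $(a,b)_M$, the graph of $f|_{(a,b)_M}$ is a basic strong cell of $\mathcal M$, and basic strong cells come equipped with canonical $\overline{\mathcal M}$-definable extensions \cite{Wencel}. Equivalently, one may set $\overline f(x) := \sup\{f(t) : t \in (a,b)_M,\ t<x\}$ (assuming $f$ increasing) using the $\overline{\mathcal M}$-definable extension of the graph of $f$, and invoke the o-minimal monotonicity theorem in $\overline{\mathcal M}$ to pass to a subinterval on which $\overline f$ is continuous and strictly monotone.

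With $\overline f \colon (\alpha, \beta)_{\overline M} \to \overline M$ in hand, the proof concludes via two density/o-minimality arguments. Since $(\alpha, \beta)_{\overline M} \cap M \subseteq I$ and $M$ is dense in $\overline M$, the $\overline{\mathcal M}$-definable set $(\alpha, \beta)_{\overline M} \setminus I$ has empty interior and hence is finite by o-minimality, so some open subinterval $(\alpha', \beta')_{\overline M}$ is contained in $I$. The same reasoning applied to $\overline f((\alpha', \beta')_{\overline M})$, whose $M$-points lie in $M \setminus I$, yields an open subinterval of $\overline M$ contained in $\overline M \setminus I$. Its preimage under $\overline f$ is an open interval of $\overline M$ contained in $I$, and $\overline f$ restricted to this preimage is an $\overline{\mathcal M}$-definable bijection between infinite subsets of $I$ and of $\overline M \setminus I$, contradicting the self-sufficiency of $I$. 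The principal obstacle is the extension step; the subsequent density bookkeeping is routine.
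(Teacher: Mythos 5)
Your proposal follows essentially the same route as the paper's proof: reduce to finiteness of images via Lemma \ref{lem:self_weak}, apply the strong monotonicity theorem of Wencel to get strongly continuous monotone pieces, extend to an $\overline{\mathcal M}$-definable continuous function on the completion (the paper obtains this extension immediately from the definition of strong continuity, so the step you single out as the ``principal obstacle'' requires no extra work), and then combine density of $M$ in $\overline{M}$ with o-minimality of $\overline{\mathcal M}$ to contradict the self-sufficiency of $I$. The one imprecise step is your assertion that the $M$-points of $\overline f((\alpha',\beta')_{\overline M})$ lie in $M\setminus I$ --- you only know this for images of $M$-points of the domain, not for $M$-points of the image --- but the conclusion you need (that the image meets $I$ in a set with empty interior, hence finite) still follows by pulling back a hypothetical open subinterval of the image contained in $I$ and locating an $M$-point in its preimage, which is exactly the paper's argument for the finiteness of its set $T$.
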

\begin{proof}
	Put $J:=I \cap M$.
	It is easy to see that $J$ is $\mathcal M$-definable.
	
	Let $J_1$ be a definable subset of $J$ and $f:J \to M \setminus I$ be a definable function.
	We only have to prove that $f(J_1)$ is a finite set by Lemma \ref{lem:self_weak}.
	By \cite[Lemma 2.6(b)]{Wencel}, we can decompose $J_1$ into a finite set $F$ and finitely many definable open convex sets $X_1,\ldots, X_k$ such that the restriction $f|_{X_i}$ of $f$ to $X_i$ is strongly continuous and monotone.
	Let $\overline{X}_i$ be the completion of $X_i$ defined in \cite[p.143]{Wencel}.
	By the definition of strong continuity, for every $1 \leq i \leq k$, there exists an $\overline{\mathcal M}$-definable continuous extension $\overline{f}_i: \overline{X}_i \to \overline{M}$ of $f|_{X_i}$.
	
	We show that $T:=I \cap \bigcup_{i=1}^k \overline{f}_i(\overline{X}_i)$ is a finite set.
	Assume for contradiction that $T$ is infinite.
	There exists $1 \leq i \leq k$ such that $S:=I \cap \overline{f}_i(\overline{X}_i)$ is infinite.
	The inverse image $(\overline{f}_i)^{-1}(S)$ is infinite.
	Since $(\overline{f}_i)^{-1}(S)$ is definable in the o-minimal structure $\overline{\mathcal M}$, it contains a nonempty open interval.
	Since $M$ is dense in $\overline{M}$, there exists $x_0 \in M \cap (\overline{f}_i)^{-1}(S) \subseteq X_i$.
	This means that $f(x_0) =\overline{f}_i(x_0) \in I$, which contradicts the setting that the target space of $f$ is $M \setminus I$. 
	We have shown that $T$ is a finite set.
	
	Since $I$ is self-sufficient and $\overline{f}_i(\overline{X_i} \setminus T) \subseteq \overline{M} \setminus I$, the image $\overline{f}_i(\overline{X_i} \setminus T)$ is a finite set for each $1 \leq i \leq k$ by Lemma \ref{lem:self_weak}.
	Since $$f(J_1) \subseteq f(F) \cup \bigcup_{i=1}^k (\overline{f}_i(\overline{X_i} \setminus T) \cup \overline{f}_i(\overline{X_i} \cap T)),$$
	$f(J_1)$ is a finite set.
\end{proof}

\begin{proposition}\label{thm:correspond}
	Let $\mathcal M=(M,<,\ldots)$ be a weakly o-minimal structure admitting strong cell decomposition.
	Let $\overline{\mathcal M}=(\overline{M},<,\ldots)$ be the canonical o-minimal extension of $\mathcal M$.
	\begin{enumerate}
		\item[(i)] $\myNum(\mathcal M) \geq \myNum(\overline{\mathcal M})$
		\item[(ii)] For every $\dim_{\overline{\mathcal M}} \in \mySet(\overline{\mathcal M})$, let us consider the function $\dim_{\mathcal M}:\myDef(\mathcal M) \to \mathbb N \cup \{-\infty\}$ defined by $$\dim_{\mathcal M}(X) = \min\{\dim_{\overline{\mathcal M}} (\overline{X})\;|\; \ \overline{X} \in \myDef_n(\overline{\mathcal M}) \text{ with } \overline{X} \cap M^n = X\}$$
		for $X \in \myDef_n(\mathcal M)$.
		Then, $\dim_{\mathcal M}$ belongs to $\mySet(\mathcal M)$.
	\end{enumerate}
\end{proposition}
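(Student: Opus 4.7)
The plan is to prove (ii) by identifying $\dim_{\mathcal M}$ concretely with $\mydim[I]$ for a self-sufficient set $I\subseteq M$ extracted from $\dim_{\overline{\mathcal M}}$; part (i) will then follow by an injectivity argument. First apply the o-minimal case of Lemma \ref{lem:omin_dimform_weak} to write $\dim_{\overline{\mathcal M}} = \mydim[\overline{I}]$ for some $\overline{\mathcal M}$-definable $\overline{I} \subseteq \overline{M}$. The axiom $\dim_{\overline{\mathcal M}}(\overline{M}) = 1$ forces $\overline{I}$ to have nonempty interior (otherwise both $\overline{I}$ and $\overline{M}\setminus\overline{I}$ would contribute dimension $0$), so Lemma \ref{lem:omin_equiv_weak} applied to $\overline{\mathcal M}$ certifies that $\overline{I}$ is self-sufficient. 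Set $I := \overline{I} \cap M$: Lemma \ref{lem:self_sufficient_gen} shows $I$ is self-sufficient in $\mathcal M$, and density of $M$ in $\overline{M}$ together with o-minimality of $\overline{\mathcal M}$ ensures that $I$ inherits nonempty interior from $\overline{I}$. Thus $\mydim[I]$ is a dimension function on $\mathcal M$ by Lemma \ref{lem:omin_equiv_weak}.

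The core task is to show $\dim_{\mathcal M} = \mydim[I]$. The one-dimensional case is the key computation: for a bounded open interval $(a,b) \subseteq M$ with $a,b \in M$, any $\overline{\mathcal M}$-definable extension $\overline{X}$ of $(a,b)$ agrees with the $\overline{M}$-interval $\{z \in \overline{M} : a < z < b\}$ on the dense subset $M$, so their symmetric difference is $\overline{\mathcal M}$-definable, disjoint from $M$, and therefore finite by o-minimality. Hence $\dim_{\mathcal M}((a,b)) = \dim_{\overline{\mathcal M}}(\{z \in \overline{M} : a < z < b\})$, and the characterizations underlying Lemma \ref{lem:weak_omin_basic} applied in both structures identify this common value with $\mydim[I]((a,b))$ via the equivalence ``$\overline{I}$ has nonempty interior inside the $\overline{M}$-interval iff $I$ has nonempty interior inside $(a,b)$'', a routine density argument. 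To lift this agreement to all of $\myDef(\mathcal M)$ I verify the axioms of Definition \ref{def:dimension} for $\dim_{\mathcal M}$ directly: axiom (1) uses explicit candidate extensions together with the fact that any extension of $M$ is cofinite in $\overline{M}$ by density; $(2)_n$ uses that $\overline{X}\cup\overline{Y}$ extends $X \cup Y$ while any extension of $X \cup Y$ intersected with a fixed extension of $X$ extends $X$; $(3)_n$ is immediate since the extension operation commutes with coordinate permutations. The main obstacle is axiom $(4)_n$: establishing $\mathcal M$-definability of $X(i)$ and the additivity formula. I plan to handle it by choosing a canonical $\overline{\mathcal M}$-definable extension $\overline{X}$ of $X$ supplied by Wencel's strong cell decomposition, applying axiom $(4)_n$ to $\overline{X}$ in $\overline{\mathcal M}$, and using the one-dimensional identification on each fiber to show that the resulting set $\overline{X}(i)$ intersects $M^{n-1}$ in precisely $X(i)$. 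Once all four axioms hold, Lemma \ref{lem:weak_omin_basic} combined with the interval-level agreement forces $\dim_{\mathcal M} = \mydim[I]$ globally, which proves (ii).

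Part (i) then follows from (ii) by the injectivity of the induced map $\dim_{\overline{\mathcal M}} \mapsto \dim_{\mathcal M}$. By Proposition \ref{prop:weakly_omin_equiv} applied to $\overline{\mathcal M}$, two distinct dimension functions on $\overline{\mathcal M}$ correspond to self-sufficient sets $\overline{I}_1, \overline{I}_2 \subseteq \overline{M}$ with $\overline{I}_1 \triangle \overline{I}_2$ infinite. By o-minimality this symmetric difference contains an open interval of $\overline{M}$, and by density its intersection with $M$ is infinite; hence $I_1 \triangle I_2$ is infinite, i.e., $I_1 \not\myfineq I_2$ in $\mathcal M$. Lemma \ref{lem:well_defined_weak} then gives $\mydim[I_1] \neq \mydim[I_2]$, so the induced dimension functions $\dim^1_{\mathcal M}$ and $\dim^2_{\mathcal M}$ on $\mathcal M$ produced by (ii) remain distinct, yielding $\myNum(\mathcal M) \geq \myNum(\overline{\mathcal M})$.
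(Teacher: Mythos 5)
Your overall architecture starts the same way as the paper's: write $\dim_{\overline{\mathcal M}}=\mydim[\overline I]$ for a self-sufficient $\overline I$, pass to $I=\overline I\cap M$ via Lemma \ref{lem:self_sufficient_gen}, and note that $\mydim[I]$ is a dimension function by Lemma \ref{lem:omin_equiv_weak}. Where you diverge is in how the identity between $\mydim[I]$ and the minimum-over-extensions function is established, and that is where your plan has a genuine gap. You take the min formula as the \emph{definition} of $\dim_{\mathcal M}$ and propose to verify axioms (1)--(4) for it directly, reserving the identification with $\mydim[I]$ to a one-dimensional computation plus Lemma \ref{lem:weak_omin_basic}. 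The reduction via Lemma \ref{lem:weak_omin_basic} is sound, and your treatment of (1), $(2)_n$, $(3)_n$ and of the interval case is fine. But your sketch of $(4)_n$ does not contain the idea that makes it work. To apply $(4)_n$ to an extension $\overline X$ in $\overline{\mathcal M}$ and pull the conclusion back to $X$, you need simultaneously that (a) for every $x\in\Pi(X)$ the fiber $\overline X_x^{\Pi}$ realizes the minimum $d(X_x^{\Pi})$ over all extensions of $X_x^{\Pi}$ (otherwise $\overline X(i)\cap M^{n-1}\neq X(i)$), (b) $\overline X(i)$ realizes the minimum for $X(i)$, and (c) $\overline X\cap\Pi^{-1}(\overline X(i))$ realizes the minimum for $X\cap\Pi^{-1}(X(i))$. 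None of these holds for an arbitrary extension of minimal total dimension, and "canonical extension supplied by Wencel's strong cell decomposition'' does not by itself deliver them: since $\dim_{\overline{\mathcal M}}=\mydim[\overline I]$ is not the topological dimension, the $\dim_{\overline{\mathcal M}}$-dimension of a basic cell is not determined by its cell type but by how it sits relative to the sets $\overline I\langle\tau\rangle$. Your sketch never requires the decomposition to partition $\{\overline I\langle\tau\rangle\;|\;\tau\in\{0,1\}^n\}$, and without that compatibility the fiberwise and projected minimality claims cannot even be formulated cell by cell.

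The paper sidesteps exactly this difficulty by reversing the logical order: it \emph{defines} $\dim_{\mathcal M}:=\mydim[I\cap M]$, which is already known to satisfy all the axioms by Lemma \ref{lem:omin_equiv_weak}, and then proves the min formula as an identity. The proof takes a minimal-dimension extension $\overline X$, decomposes it into basic cells partitioning $\{\overline X\}\cup\{\overline I\langle\tau\rangle\}$, discards the cells missing $M^n$ (using the observation that shrinking a minimal extension while preserving its trace on $M^n$ keeps it minimal), and shows that each surviving basic cell $C\subseteq\overline I\langle\tau\rangle$ intersects $M^n$ in a strong cell of $J\langle\tau\rangle$ with $\mydim[J](C\cap M^n)=\mydim[\overline I](C)$. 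If you want to keep your route, you would have to prove essentially these same two claims in order to certify $(4)_n$ for the min formula, so the verification of $(4)_n$ is not a step you can defer; it carries the entire technical weight of the proposition. Your part (i) is fine, and is essentially the paper's injection $[\,\overline I\,]\mapsto[\,\overline I\cap M\,]$ of $\mySelf(\overline{\mathcal M})$ into $\mySelf(\mathcal M)$ read through Proposition \ref{prop:weakly_omin_equiv}.
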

\begin{proof}
	(i)
	By Proposition \ref{prop:weakly_omin_equiv}, it suffices to construct an injection $\eta:\mySelf(\overline{\mathcal M}) \to \mySelf({\mathcal M})$.
	
Let $I_1$ and $I_2$ be two self-sufficient $\overline{\mathcal M}$-definable subsets of $\overline{M}$.
Recall that $I_i \cap M$ is $\mathcal M$-definable for $i=1,2$.
It is easy to see that $I_1 \myfineq^{\overline{\mathcal M}} I_2$ if and only if $I_1 \cap M \myfineq^{\mathcal M} I_2 \cap M$ using the fact that $M$ is dense in $\overline{M}$. 
We put $\eta([I])=[I \cap M]$, where $I$ is a self-sufficient $\overline{\mathcal M}$-definable subset of $\overline{M}$.
The map $\eta:\mySelf(\overline{\mathcal M}) \to \mySelf({\mathcal M})$ is a well-defined injective map by the above observation and Lemma \ref{lem:self_sufficient_gen}.

	(ii)
By Proposition \ref{prop:weakly_omin_equiv}, there exists a self-sufficient $\overline{\mathcal M}$-definable subset $I$ of $\overline{M}$ such that $\mydim[I]=\dim_{\overline{\mathcal M}}$ by Lemma \ref{lem:omin_dimform_weak} and Lemma \ref{lem:omin_equiv_weak}.
%We reduce to the case in which $I$ satisfies the following conditions:
%\begin{enumerate}
%	\item[(a)] $I$ is a union of mutually disjoint open intervals $I_1, \ldots, I_l$;
%	\item[(b)] $\mycl(I_i) \cap \mycl(I_j)=\emptyset$ for $i \neq j$.
%\end{enumerate}
%It is easy to show that $M \setminus I$ is a union of finitely many mutually disjoint closed intervals which are not singletons if $I$ satisfies condition (a) and (b).
%
%Recall that $\overline{\mathcal M}$ is o-minimal.
%It is an easy exercise to show that $\myint(\mycl(I))$ satisfies conditions (a) and (b) using the fact that $I$ is a union of a finite set and finitely many open intervals.
%We have $I \myfineq^{\overline{\mathcal M}} \myint(\mycl(I))$.
%This implies $\mydim[I]=\mydim[\myint(\mycl(I))]$.
%Therefore, we may assume that $I$ satisfies conditions (a) and (b) by replacing $I$ with $\myint(\mycl(I))$ if necessary.

By Lemma \ref{lem:self_sufficient_gen}, $J:=I \cap M$ is $\mathcal M$-definable and self-sufficient.
Put $\dim_{\mathcal M}=\mydim[J]$, which is a dimension function by Lemma \ref{lem:omin_equiv_weak}.
We prove the equality
\begin{equation}
	\dim_{\mathcal M}(X) = \min\{\dim_{\overline{\mathcal M}} (\overline{X})\;|\; \ \overline{X} \in \myDef_n(\overline{\mathcal M}) \text{ with } \overline{X} \cap M^n = X\} \label{eq:acd}
\end{equation}
holds for $X \in \myDef_n(\mathcal M)$.
Since $\dim_{\overline{\mathcal M}}$ and $\dim_{{\mathcal M}}$ are dimension functions, we can use the relations in Definition \ref{def:dimension}.
We use this fact without notice in this proof.

We denote the right hand of equality (\ref{eq:acd}) by $d(X)$.
The following claim is obvious from the definition of $d(X)$.
\medskip

\textbf{Claim 1.}
Let $\overline{X}, \overline{X'} \in \myDef_n(\overline{\mathcal M})$.
If $\overline{X'} \subseteq \overline{X}$, $\overline{X} \cap M^n = \overline{X'} \cap M^n =X$ and $\dim_{\overline{\mathcal M}} (\overline{X})=d(X)$, then $\dim_{\overline{\mathcal M}} (\overline{X'})=d(X)$.
\medskip

Put $I_0:=I$, $I_1 := \overline{M} \setminus I$, $J_0:=J$ and $J_1:=M \setminus J$. 
For every positive integer $n$, $0<k \leq n$ and $\tau \in \{0,1\}^n$, the $k$-th coordinate of $\tau$ is denoted by $\tau(k)$ and put $I \langle \tau \rangle:=\prod_{i=1}^{n} I_{\tau(i)}$.
We define $J \langle \tau \rangle$ in the same manner.
The following claim is easily proved by induction on $n$.
See \cite[p.455]{Wencel2} for the definition of basic cells.
\medskip

\textbf{Claim 2.} Let $\tau \in \{0,1\}^n$.
Let $C$ be a basic $\langle i_1,\ldots, i_n\rangle$-cell in $\overline{M}^n$ contained in $I\langle \tau \rangle$ such that $C \cap M^n \neq \emptyset$.
Then $C \cap M^n$ is a strong $\langle i_1,\ldots, i_n\rangle$-cell in $M^n$ contained in $J\langle \tau \rangle$ such that $\dim_{\mathcal M}(C \cap M^n)=\dim_{\overline{\mathcal M}}(C)=\sum_{1 \leq j \leq n, \tau(j)=0}i_j$.
\medskip

Let $\overline{X} \in \myDef_n(\overline{\mathcal M})$ such that $\overline{X} \cap M^n = X$ and $\dim_{\overline{\mathcal M}} (\overline{X})=d(X)$.
By \cite[Proposition 2.3]{Wencel2}, there exists a cell decomposition $\mathcal C$ into basic cells partitioning $\{\overline{X}\} \cup \{I\langle \tau \rangle \;|\; \tau \in \{0,1\}^n\}$.

Put $\mathcal D=\{C \in \mathcal C\;|\; C \subseteq \overline{X}\}$ and $\mathcal E=\{C \in \mathcal D\;|\; C \cap M^n \neq \emptyset\}$.
Observe that $\overline{X}=\bigcup_{C \in \mathcal D}C$.
Put $\overline{X'}:=\bigcup_{C \in \mathcal E}C$, then $\overline{X'} \subseteq \overline{X}$ and $\overline{X'} \cap M^n =X$.
We have $\dim_{\overline{\mathcal M}} (\overline{X'})=d(X)$ by Claim 1.
Using Claim 2, we get
\begin{align*}
	d(X) &= \dim_{\overline{\mathcal M}} (\overline{X'}) = \max\{\dim_{\overline{\mathcal M}}(C)\;|\; C \in \mathcal E\}\\
	&= \max\{\dim_{{\mathcal M}}(C \cap M^n)\;|\; C \in \mathcal E\} = \dim_{\mathcal M}(\bigcup_{C \in \mathcal E}(C \cap M^n))\\
	&=\dim_{\mathcal M}(X).
\end{align*}
We have shown equality (\ref{eq:acd}).
\end{proof}

The following proposition shows that the inequality in Proposition \ref{prop:weakly_omin_equiv}(1) is strict in general.
 
\begin{proposition}\label{prop:path}
	Let $\mathcal L=\{<,U\}$ be a language in which the symbols are binary predicates.
	We interpret these symbols on $\mathbb Q$ as follows:
	The symbol $<$ defines the standard order on $\mathbb Q$.
	For $x,y \in \mathbb Q$, the relation $U(x,y)$ holds if and only if $y<x+\pi$.
	Put $\mathcal M:=(\mathbb Q,<,U)$, which is an $\mathcal L$-structure.
	Then, $\mathcal M$ is a weakly o-minimal structure admitting strong cell decomposition, $\myNum(\mathcal M)=\aleph_0$ and $\myNum(\overline{\mathcal M})=1$.
\end{proposition}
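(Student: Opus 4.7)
The plan is to apply Proposition \ref{prop:weakly_omin_equiv} and count $\myfineq$-classes of self-sufficient sets in each of the two structures, after a tight classification of the definable unary functions. First I would verify that $\mathcal M$ is weakly o-minimal and admits strong cell decomposition, and describe its canonical o-minimal extension: concretely, one can realize $\overline{\mathcal M}$ as $(\overline{M},<,U)$ with $\overline{M} = \mathbb Q + \mathbb Z\pi \subseteq \mathbb R$. The crucial observation is that the formula $\neg U(x,y) \wedge \forall z\,(z < y \to U(x,z))$ defines $y = x + \pi$ in $\overline{\mathcal M}$, so each shift $x \mapsto x + n\pi$ ($n \in \mathbb Z$) is $\overline{\mathcal M}$-definable. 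The main technical claim is the converse: no $c \in \overline{M} \setminus \mathbb Z\pi$ gives a $\overline{\mathcal M}$-definable shift $x \mapsto x + c$. I would prove this by constructing, for any finite parameter set, an order-preserving automorphism of $\overline{\mathcal M}$ fixing those parameters and not commuting with shift by $c$---such an automorphism is obtained as the unique $\pi$-periodic extension of a suitably chosen order-preserving bijection of the countable dense order $\overline{M} \cap (0,\pi) = \mathbb Q \cap (0,\pi)$. Combined with the strong monotonicity theorem for $\overline{\mathcal M}$, this classifies definable continuous monotone unary functions on intervals of $\overline{M}$ as precisely the maps $x \mapsto x + n\pi$ and the constants; weak o-minimality of $\mathcal M$ and its strong cell decomposition then follow by restricting cells of $\overline{\mathcal M}$ to $M^n$ via the density of $\mathbb Q$ in $\overline{M}$.

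For $\myNum(\overline{\mathcal M}) = 1$ I would show that every self-sufficient $I \in \myDef_1(\overline{\mathcal M})$ is cofinite. Assuming $I$ and $\overline{M}\setminus I$ both contain nontrivial open intervals, a short case analysis---using a complementary interval of length $>\pi$ and shrinking a sub-interval of $I$ below $(d-c)-\pi$ to fit into it after an $n\pi$-shift when possible, or otherwise exploiting a sufficiently large shift of a complementary interval into an unbounded component of $I$---produces $n \in \mathbb Z \setminus \{0\}$ and a nontrivial open $K \subseteq I$ with $K + n\pi \subseteq \overline{M} \setminus I$. The definable bijection $x \mapsto x + n\pi$ then contradicts self-sufficiency via Lemma \ref{lem:self_weak}. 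All cofinite sets lie in a single $\myfineq$-class, yielding $|\mySelf(\overline{\mathcal M})| = 1$.

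To obtain $\myNum(\mathcal M) = \aleph_0$ I would argue that \emph{every} infinite $I \in \myDef_1(\mathcal M)$ is self-sufficient. Any $\mathcal M$-definable function $\mathbb Q \to \mathbb Q$ is the restriction of an $\overline{\mathcal M}$-definable one whose image lies in $\mathbb Q$; since $(\mathbb Q + n\pi) \cap \mathbb Q = \emptyset$ for $n \neq 0$, only the identity and $\mathbb Q$-valued constant pieces contribute, so $\mathcal M$-definable unary functions $\mathbb Q \to \mathbb Q$ are piecewise the identity or a $\mathbb Q$-constant. Hence for any definable $f: J \to M \setminus I$ with $J \subseteq I$, every identity piece $K$ would force $K \subseteq I \cap (M \setminus I) = \emptyset$, leaving only constant pieces and hence $f(J)$ finite; Lemma \ref{lem:self_weak} then gives self-sufficiency. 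Finally $\myDef_1(\mathcal M)$ is countable so $|\mySelf(\mathcal M)| \leq \aleph_0$, and the family $I_n := \mathbb Q \cap (-\infty, n\pi)$ for $n \in \mathbb Z_{>0}$ is pairwise $\myfineq$-inequivalent (each symmetric difference $\mathbb Q \cap (n\pi, m\pi)$ is infinite), giving $|\mySelf(\mathcal M)| = \aleph_0$. The principal obstacle is the automorphism-based non-definability argument underlying the unary function classification in $\overline{\mathcal M}$; once that is secured, both cardinality computations follow by routine bookkeeping.
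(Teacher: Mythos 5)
Your overall architecture (count $\myfineq$-classes of self-sufficient sets via Proposition \ref{prop:weakly_omin_equiv}, kill non-cofinite sets in $\overline{\mathcal M}$ with a $\pi$-shift, and exploit $(\mathbb Q+n\pi)\cap\mathbb Q=\emptyset$ on the $\mathcal M$ side) matches the paper's, and your route to $\myNum(\mathcal M)=\aleph_0$ is in fact a nice strengthening: you show \emph{every} infinite definable subset of $\mathbb Q$ is self-sufficient, where the paper only exhibits the family $I_a=(a,\infty)$. But there is a genuine gap at the foundation, and it is exactly where the paper spends most of its effort. You propose to obtain weak o-minimality and strong cell decomposition of $\mathcal M$, and the classification of definable monotone unary functions, as consequences of properties of $\overline{\mathcal M}$. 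This is circular as stated --- the canonical o-minimal extension is only defined once $\mathcal M$ is known to be weakly o-minimal with strong cell decomposition --- and even read charitably (i.e., working directly with the concrete structure $(\mathbb Q+\mathbb Z\pi,<,U)$), your automorphism argument only delivers \emph{negative} statements: no shift by $c\notin\mathbb Z\pi$ is definable over given parameters. It does not deliver the \emph{positive} structure theorem you then rely on, namely that every definable subset of the line is a finite union of points and convex sets with controlled endpoints, and that every definable monotone function is piecewise a shift $x\mapsto x+n\pi$ or a constant. In a countable, non-$\omega$-categorical structure, ``union of orbits of the parameter stabilizer'' is necessary but not sufficient for definability (and there are far too many orbit-unions), so o-minimality of $(\mathbb Q+\mathbb Z\pi,<,U)$ and the trace description of $\myDef_n(\mathcal M)$ still require a quantifier-elimination or normal-form argument --- this is precisely the content of the paper's Claims 1 and 2, proved by a term-by-term analysis and induction on $n$, and it cannot be replaced by the automorphism computation. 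Two smaller points: $\overline{M}\cap(0,\pi)$ is $(\mathbb Q+\mathbb Z\pi)\cap(0,\pi)$, not $\mathbb Q\cap(0,\pi)$ (harmless, but the fundamental domain is the larger set); and the identification $\overline{M}=\mathbb Q+\mathbb Z\pi$ itself needs the verification that the cuts at $q+n\pi$ for all $n\in\mathbb Z$ (not just $n=\pm1$) are $\mathcal M$-definable and that no further cuts arise, which again leans on the missing normal form.

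Once that positive description of definable sets and functions is supplied, the rest of your plan goes through: your argument for $\myNum(\overline{\mathcal M})=1$ is essentially the paper's (the paper's version is slightly cleaner --- reduce to $I\supseteq(-\infty,c)$ with $c$ the left endpoint of the leftmost complementary interval and pull $(c,d)$ back by the single shift $x\mapsto x-\pi$, so the general $n\pi$ case analysis is unnecessary), and your derivation of self-sufficiency of all infinite definable subsets of $\mathbb Q$ from the function classification, together with the countability of $\myDef_1(\mathcal M)$ and the pairwise inequivalent family, correctly yields $\myNum(\mathcal M)=\aleph_0$ via Lemma \ref{lem:self_weak} and Proposition \ref{prop:weakly_omin_equiv}.
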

\begin{proof}
	Consider the following property for $n>0$ and a definable subset $X$ of $\mathbb Q^n$:
	\begin{description}
		\item[$(*)_{n,X}$] For every definable subset $X$ of $\mathbb Q^n$, there exist finitely many strong cells $C_1, \ldots, C_k$ such that $X=\bigcup_{i=1}^k C_i$, $C_i \cap C_j = \emptyset$ for $i \neq j$, and either $\Pi_1^n(C_i)=\Pi_1^n(C_j)$ or $\Pi_1^n(C_i) \cap \Pi_1^n(C_j) = \emptyset$ for $i \neq j$. 
		\item[$(**)_n$] Let $\mathcal A=\{A_i\}_{i=1}^m$ be a finite family of definable subsets of $\mathbb Q^n$. 
		There exists a strong cell decomposition partitioning $\mathcal A$.
	\end{description}
	
	Observe that $\mathcal L(M)$-terms are either free variables or constants.
	First we show the following claim:
	\medskip
	
	\textbf{Claim 1.} Every definable subset $X$ of $\mathbb Q^n$ is of the form:
	\begin{equation}
		X=\bigcup_{i=1}^l \bigcap_{j=1}^{k_i}\{\overline{x} \in \mathbb Q^n\;|\; *_{ij}(t_{ij1}(\overline{x}),t_{ij2}(\overline{x}))\}, \label{eq:mm}
	\end{equation}
	where $*_{ij} \in \{<,=,U,\neg U\}$ and $t_{ijk}(\overline{x})$ are $\mathcal L(M)$-terms for $k=1,2$.
	
	Furthermore, if $n>1$, $l=1$ and $\mathcal M$ possesses property $(**)_{n-1}$, then $\mathcal M$ enjoys the property $(*)_{n,X}$.  
	\begin{proof}[Proof of Claim 1]
		First we prove the first statement of Claim 1.
		Let $\mathcal S_n$ be the collection of definable subsets of $\mathbb Q^n$ of the form in Claim 1.
		Put $\mathcal S:=\bigcup_{n=1}^{\infty}\mathcal S_n$.
		We only have to show that $\mathcal S$ is closed under taking finite union, negation, Cartesian product and projection image.
		It is obvious that $\mathcal S$ is closed under taking finite union, negation and Cartesian product.
		We only have to show that, if $X$ is of the form in Claim 1, then $\Pi_1^n(X)$ is of the form in Claim 1.
		
		We can easily reduce to the case where $l=1$.
		We may assume that $X$ is of the form:
		\begin{equation}
		\bigcap_{j=1}^{k}\{\overline{x} \in \mathbb Q^n\;|\; *_{j}(t_{j1}(\overline{x}),t_{j2}(\overline{x}))\}, \label{eq:1}
		\end{equation}
		where $*_{j} \in \{<,=,U,\neg U\}$ and $t_{jp}(\overline{x})$ are $\mathcal L(M)$-terms for $p=1,2$.
		Let $x_i$ be the $i$-th coordinate of $\overline{x}$ for $1 \leq i \leq n$.
		Put $\overline{x'}:=(x_1,\ldots,x_{n-1})$.
		
		Put $\phi_1(x,y):=\text{`}x=y\text{'}$, $\phi_2(x,y):=\text{`}x<y\text{'}$ and $\phi_3(x,y):=\text{`}U(x,y)\text{'}$.
		For $1 \leq j \leq 3$, we put $\phi_{j,1}(x,y)=\phi_j(x,y)$ and $\phi_{j,0}(x,y)=\neg\phi_j(x,y)$.
		Let $\mathcal T_X$ be the collection of the terms appearing in the definition of $X$ other than $x_n$.
		By the definition of $\mathcal T_x$, the variables appearing the terms in $\mathcal T_X$ belong to $\{x_1,\ldots, x_{n-1}\}$.
		For every $\tau \in \mathcal P_X:=(\{1,2,3\} \times \mathcal T_X \times \mathcal T_X)^{\{0,1\}}$, 
		Put $$\psi \langle \tau \rangle(\overline{x'}) :=\bigwedge_{i=1}^3\bigwedge_{(t_1(\overline{x'}),t_2(\overline{x'})) \in \mathcal T_X \times \mathcal T_X} \phi_{i,\tau(i,t_1(\overline{x'}),t_2(\overline{x'}))}(t_1(\overline{x'}),t_2(\overline{x'})).$$
		Put $Z\langle \tau \rangle:= \{\overline{z} \in \mathbb Q^{n-1}\;|\; \mathcal M \models \psi \langle \tau \rangle(\overline{z})\}$.
		Observe that the order of the entries in $\{t(\overline{z})\;|\; t(\overline{x'}) \in \mathcal T_X\} \cup \{t(\overline{z})+\pi\;|\; t(\overline{x'}) \in \mathcal T_X\}$ is independent of the choice of $\overline{z} \in Z\langle \tau \rangle$.
		Therefore, if $\mathcal M \models \exists y\ \bigwedge_{j=1}^k *_j(t_{j1}(\overline{z}_0,y), t_{j2}(\overline{z}_0,y))$ holds for some $\overline{z}_0 \in Z\langle \tau \rangle$, $\mathcal M \models \exists y\ \bigwedge_{j=1}^k *_j(t_{j1}(\overline{z},y), t_{j2}(\overline{z},y))$ holds for every $\overline{z} \in Z\langle \tau \rangle$.
		This implies that $\Pi_1^n(X)$ is a union of definable sets of the form $Z\langle \tau \rangle$.
		We have proved Claim 1 other than the `furthermore' part.
		
		Let us move on to the proof of  the `furthermore' part.
		It is easy to show that, if $C \subseteq \mathbb Q^{n-1}$ be a strong cell such that $C \subseteq Z \langle \tau \rangle$ for some $\tau \in \mathcal P_X$, then $X \cap (C \times \mathbb Q)$ is a strong cell.
		We omit the proof.
		By the assumption, there exists a strong cell decomposition $\mathcal C$ partitioning the family $\{Z\langle \tau \rangle\;|\; \tau \in \mathcal P_X\}$.
		The family of cells given by $\{X \cap (C \times \mathbb Q)\;|\; C \subseteq \Pi_1^n(X)\}$ is the family of strong cells satisfying the conditions in property $(*)_{n,X}$. 
	\end{proof}
	
	Claim 1 immediately implies that $\mathcal M$ is weakly o-minimal.
	\medskip
	
	\textbf{Claim 2.} Let $X_i$ be definable subsets of $\mathbb Q^n$ for $1 \leq i \leq k$.
	Put $X=\bigcup_{i=1}^k X_i$
	Suppose that $\mathcal M$ enjoys property $(**)_{n-1}$ and property $(*)_{n,X_i}$ for $1 \leq i \leq k$. 
	Then, $\mathcal M$ possesses $(*)_{n,X}$.
	\begin{proof}[Proof of Claim 2]
		Let $\mathcal C_i$ be the strong cells decomposition partitioning $X_i$ satisfying the conditions in $(*)_{n,X_i}$
		Apply $(**)_{n-1}$ to the family $\{\pi(C)\;|\; 1 \leq i \leq k, C \in \mathcal C_i\}$.
		We get a decomposition $\mathcal D$ of $\mathbb Q^{n-1}$ partitioning the above family.
		By replacing $\mathcal C_i$ with $\{C \cap (D \times \mathbb Q)\;|\; D \in \mathcal D, C \in \mathcal C_i, D \subseteq \Pi_1^n(C)\}$, we may assume that, for every $1 \leq i \leq j \leq k$, $C \in \mathcal C_i$ and $C' \in \mathcal C_j$, we may assume that either $\Pi_1^n(C)=\Pi_1^n(C')$ or $\Pi_1^n(C) \cap \Pi_1^n(C')=\emptyset$ holds.
		
		Put $$\mathcal E:=\{\Pi_1^n(C)\;|\; C \in \bigcup_{i=1}^k \mathcal C_i\}.$$
		We use this notation $\mathcal E$ by replacing $\mathcal C_i$ with a new family.
		Recall that the strong cells $C$ are either the graph of a definable strongly continuous function on $\pi(C)$ or the region sandwiched by the graph of definable strongly continuous functions on $\pi(C)$.
		For every $D \in \mathcal E$, let $\mathcal F_D$ be the collections of definable strongly continuous functions defined on $D$ appearing in the definition of strong cells $C$ in $\bigcup_{i=1}^k \mathcal C_i$ such that $\pi(C)=D$.
		For every $D \in \mathcal E$ and $f,g:D \to \overline{M}\cup \{\pm \infty\} \in \mathcal F_D$, put $Y \langle D,f,g,0 \rangle:=\{x \in D\;|\; f(x)=f(g)\}$ and $Y \langle D,f,g,1 \rangle:=\{x \in D\;|\; f(x)<f(g)\}$.
		Apply $(**)_{n-1}$ to the family $\{Y \langle D,f,g,1 \rangle\;|\; D \in \mathcal E, f,g \in \mathcal F_D\}$.
		We get a decomposition $\mathcal D'$ of $\mathbb Q^{n-1}$ partitioning the above family.
		In the same manner as the (omitted) proof of \cite[Fact 2.5]{Wencel}, we may further assume that one of the following conditions are satisfied for $D \in \mathcal D'$ and $f \in \mathcal F_D$:
		\begin{center}
			$\forall x \in D\ f(x) \in M$ and $\forall x \in D \ f(x) \in (\overline{M} \setminus M \cup \{\pm \infty\})$.
		\end{center}
		We say that $f$ is \textit{$M$-valued} if the first condition of the above two holds.
		By considering $\{C \cap (D \times Q)\;|\; C \in \mathcal C_i, D \in \mathcal D' \text{ with }D \subseteq \pi(C)\}$ instead of $\mathcal C_i$, we may assume that, for each $D \in \mathcal E$, the functions in $\mathcal F_D$ can be enumerated as $f_1, \ldots, f_{l_D}$ so that $f_1(x)<f_2(x)<\cdots<f_{l_D}(x)$ holds for every $x \in D$.
		Let us consider the family $$\bigcup_{D \in \mathcal D'} \{\Gamma(f)\;|\; f \in \mathcal F_D \text{ and }f \text{ is }M \text{-valued} \} \cup \{(f_j,f_{j+1})_D\;|\; 1 \leq j < l_D\},$$
		where $\Gamma(f)$ is the graph of $f$ and $(f,g)_D:=\{(x,y) \in D \times \mathbb Q\;|\; f(x)<y<g(x)\}$.
		This family is a strong cell decomposition partitioning $X$.
	\end{proof}

	Next we want to show that $\mathcal M$ admits strong cell decomposition.
	The case $n=1$ is easy because $\mathcal M$ is weakly o-minimal.
	Suppose $n>1$.
	By Claim 1, Claim 2 and the induction hypothesis, $\mathcal M$ possesses property $(*)_{n,X}$ for every definable subset $X$ of $\mathbb Q^n$.
	
	We prove $\mathcal M$ enjoys property $(**)_n$.
	Put $A_i^{(0)}=A_i$ and $A_i^{(1)}=\mathbb Q^n \setminus A_i$.
	For every $\iota \in \{0,1\}^m$, $\iota(i)$ is the $i$-th element of the sequence $\iota$.
	For every $\iota \in \{0,1\}^m$, put $X[\iota]=\bigcap_{i=1}^m A_i^{(\iota(i))}$ and apply $(*)_{n,X[\iota]}$ to this set.
	There exists a decomposition $\mathcal C[\iota]$ of $X[\iota]$ satisfying the conditions in $(*)_{n,X[\iota]}$.
	Apply $(**)_{n-1}$ to the family $\{\pi(C)\;|\; \iota \in \{0,1\}^m, C \in \mathcal C[\iota]\}$.
	We get a strong cell decomposition $\mathcal D$ of $\mathbb Q^{n-1}$ partitioning this family.
	It is obvious that $$\{C \cap (D \times \mathbb Q)\;|\; \iota \in \{0,1\}^m,\ C \in \mathcal C[\iota], D \in \mathcal D \text{ with } D \subseteq \pi(C)\}$$
	is a strong cell decomposition partitioning $\mathcal A$.
	
	We next show that $\myNum(\mathcal M)=\aleph_0$.
	We give a complete proof though the proof for this fact is very similar to the proof of \cite[Proposition 3.22]{Fuji_dim}.
	
	We prove that $I_a:=\{x \in \mathbb Q\;|\;x>a\}$ is self-sufficient.
	Let $J \subset I_a$ be an infinite definable set and $f:J \to \mathbb Q \setminus I_a$ is a definable function.
	We want to show that $f(J)$ is a finite set.
	Let $X$ be the graph of $f$.
	By Claim 1, $X$ is of the form given in equality (\ref{eq:mm}).
	There exist definable functions $X_1,\ldots X_l$ of the form (\ref{eq:1}) such that $X=\bigcup_{i=1}^l X_i$.
	$X_i$ is the graph of restriction of $f$ to $\Pi_1^2(X_i)$.
	If $f(\Pi_1^2(X_i))$ is a finite set for each $1 \leq i \leq l$, $f(J)$ is also a finite set.
	Therefore, we may assume that $X$ is of the form (\ref{eq:1}).
	Since $X \subseteq I_a \times (\mathbb Q \setminus I_a)$, the inequality $x>y$ holds for every $(x,y) \in X$.
	Since $X_a:=\{y \in \mathbb Q\;|\;(x,y) \in X\}$ is a singleton, one of $*_j(t_{j1}(\overline{x}),t_{j2}(\overline{x}))$ should be of the form $\text{`}y=b\text{'}$ for some $b \in M$.
	This implies that $f$ is a constant function.
	We have shown that $f(J)$ is a finite set, which implies $I_a$ is self-sufficient.
	Obviously, we have $I_a \not\myfineq^{\mathcal M} I_b$ if $a \neq b$.
	Therefore, we have $\myNum(\mathcal M) \geq |\mathbb Q|=\aleph_0$ by Proposition \ref{prop:weakly_omin_equiv}.
	
	We show the inequality $\myNum(\mathcal M) \leq \aleph_0$.
	Let $\dim$ be a dimension function on $\mathcal M$.
	We define $\eta(\dim) \in \myDef_2(\mathcal M)$ as follows:
	\begin{align*}
		&\eta(\dim):=\{(a,b) \in M^2\;|\; \dim ((a,b))=0\}.
	\end{align*}
	Observe that $\eta(\dim)$ is a definable set by Definition \ref{def:dimension}(4).
	Lemma \ref{lem:weak_omin_basic} implies that $\eta$ is injective.
	$\myDef_2(\mathcal M)$ is of the cardinality smaller than or equal to the cardinality of the set of finite sequences of elements from the union of $M$ with the finite set consisting of logical symbols, two variables and $<$. 
	Therefore, we have $|\myDef_2(\mathcal M)|=|\mathbb Q|$ (under the assumption of the axiom of choice).
	We have $\myNum(\mathcal M) \leq \aleph_0$.
	
	The next task is to show that the canonical o-minimal extension $\overline{\mathcal M}=(M,<,\ldots)$ of $\mathcal M$ has only one self-sufficient set up to the equivalence condition $\myfineq^{\overline{\mathcal M}}$.
	Observe that $\sup\{y \in \mathbb Q\;|\; U(x,y)\}=x+\pi$ for every $x \in \mathbb Q$.
	Therefore, the map $\overline{M} \ni x \mapsto x+\pi \in  \overline{M}$ is definable in $\overline{\mathcal M}$.
	It is obvious that every cofinite $\overline{M}$-definable subset $I$ of $\overline{M}$ is self-sufficient and equivalent to $\overline{M}$ under $\myfineq^{\overline{\mathcal M}}$.
	
	Let $I$ be an infinite $\overline{M}$-definable subset $I$ of $\overline{M}$ whose complement is infinite.
	The remaining task is to show that $I$ is not self-sufficient.
	By o-minimality of $\overline{\mathcal M}$, one of $I$ and $\overline{M} \setminus I$ contains an open interval of the form $(-\infty,b)$ for some $b \in \overline{M}$.
	Suppose $I$ contains such an open interval.
	We can prove that $I$ is not self-sufficient in a similar manner in the other case.
	Since $\overline{M} \setminus I$ is an infinite $\overline{\mathcal M}$-definable set, there exist a finite set $F$ and open intervals $J_1, \ldots, J_m$ with $m \geq 1$ such that $\overline{M} \setminus I$ is a disjoint union of $F$ and $J_1, \ldots, J_m$.
	Let $J_1$ be the leftmost open interval among $J_1, \ldots, J_m$.
	Let $c$ be the left endpoint of $J_1$.
	Observe that $(-\infty,c) \setminus I$ is a finite set.
	Ut is easy to show that $I$ is self-sufficient if and only if $I\cup ((-\infty,c) \setminus I)$ is self-sufficient.
	We omit the proof.
	We may assume that $(-\infty,c) \subseteq I$ by considering $I\cup ((-\infty,c) \setminus I)$ instead of $I$.
	Choose $d \in \overline{M}$ so that $c<d<c+\pi$ and $(c,d) \subseteq J_1$.
	Put $K_1=(c-\pi,d-\pi)$ and $K_2=(c,d)$, then we have $K_1 \subseteq I$ and $K_2 \cap I=\emptyset$ and the map given by $x \mapsto x+\pi$ is a definable bijection between $K_1$ and $K_2$.
	This witnesses that $I$ is not self-sufficient.
	We have shown that $\mySelf(\overline{\mathcal M})$ is a singleton.
	
	We get $\myNum(\overline{\mathcal M})=1$ by Proposition \ref{prop:weakly_omin_equiv}.
\end{proof}

\end{document}